\def\F{\mathscr{F} }
\def\S{\mathcal{S} }
\def\N{\mathcal{N} }
\def\R{\mathbb{R}}
\def\C{\mathbb{C}}
\def\ba{\begin{array}}
\def\ea{\end{array}}
\def\be{\begin{enumerate}}
\def\ee{\end{enumerate}}
\def\bi{\begin{itemize}}
\def\ei{\end{itemize}}
\def\bc{\begin{cases}}
\def\ec{\end{cases}}
\def\beq{\begin{equation}}
\def\eeq{\end{equation}}
\def\beqs{\begin{equation*}}
\def\eeqs{\end{equation*}}
\def\beqa{\begin{eqnarray}}
\def\eeqa{\end{eqnarray}}
\def\beqas{\begin{eqnarray*}}
\def\eeqas{\end{eqnarray*}}
\def\bmul{\begin{multline}}
\def\emul{\end{multline}}
\def\bmuls{\begin{multline*}}
\def\emuls{\end{multline*}}
\def\bg{\begin{gather}}
\def\eg{\end{gather}}
\def\bgs{\begin{gather*}}
\def\egs{\end{gather*}}
\newtheorem{thm}{Theorem}[section]
\newtheorem{cor}[thm]{Corollary}
\newtheorem{lem}[thm]{Lemma}
\newtheorem{prop}[thm]{Proposition}
\numberwithin{equation}{section}
\begin{document}

\baselineskip=24pt \thispagestyle{empty}

\newpage

\setcounter{page}{1}

\title{
\baselineskip=24pt
 \bf On a quadratic nonlinear Schr\"{o}dinger \\[.5ex]
 \bf equation:  sharp well-posedness\\[.5ex]
 \bf and ill-posedness\footnote{This work is
 supported by National Natural Science Foundation of China
 under grant numbers 10471047 and 10771074.}}

\bigskip

\author{
\baselineskip=24pt
 \normalsize Yongsheng Li and Yifei Wu\footnote{
             Email: yshli@scut.edu.cn (Y. S. Li) and
             yerfmath@yahoo.cn (Y. F. Wu)}\\[1ex]
 \normalsize Department of Mathematics,
             South China University of Technology, \\[1ex]
 \normalsize Guangzhou, Guangdong 510640, P. R. China
 }

\bigskip

\date{}

\bigskip

\bigskip

\leftskip 1cm \rightskip 1cm

\maketitle

\noindent
 {\small{\bf Abstract}\quad
 \baselineskip=24pt
We study the initial value problem of  the quadratic nonlinear
Schr\"{o}dinger equation
$$
iu_t+u_{xx}=u\bar{u},
$$
where $u:\R\times \R\rightarrow \C$. We prove that it's locally
well-posed in $H^s(\R)$ when $s\geq -\dfrac{1}{4}$ and ill-posed
when $s< -\dfrac{1}{4}$, which improve the previous work in
\cite{KPV}. Moreover, we consider the problem in the following
space,
$$
H^{s,a}(\R)=\left\{u:\|u\|_{H^{s,a}}\triangleq
\left(\displaystyle\int
\left(|\xi|^s\chi_{\{|\xi|>1\}}+|\xi|^a\chi_{\{|\xi|\leq
1\}}\right)^2|\hat{u}(\xi)|^2\,d\xi\right)^{\frac{1}{2}}<\infty\right\}
$$
for $s\leq 0, a\geq 0$. We establish the  local well-posedness in
$H^{s,a}(\R)$ when $s\geq -\dfrac{1}{4}-\dfrac{1}{2}a$ and
$a<\dfrac{1}{2}$. Also we prove that it's ill-posed in $H^{s,a}(\R)$
when $s<-\dfrac{1}{4}-\dfrac{1}{2}a$ or $a>\dfrac{1}{2}$. It remains
the cases on the line segment: $a=\dfrac{1}{2}$, $-\dfrac{1}{2}\leq
s\leq 0$ open in this paper.}
\bigskip

\noindent
 {\small{\bf Keywords:}\quad nonlinear Schr\"{o}dinger equation,
local well-posednss, ill-posedness, Bourgain space}
\bigskip

\noindent
 {\small{\bf  MR(2000) Subject Classification:}\quad 35Q55}

\bigskip

\bigskip

\leftskip 0cm \rightskip 0cm

\normalsize

\baselineskip=24pt

\section{Introduction}
This paper is concerned with the low regularity behavior of the
initial value problem (IVP) for 1-D quadratic nonlinear
Schr\"{o}dinger equations
\renewcommand{\arraystretch}{2}
\begin{eqnarray}\label{QNLS}
  && iu_t+u_{xx}=Q(u,\bar{u}),
     \qquad x,\,t\in \R,\\
  && u(x,0)=u_0(x),\label{1.2}
\end{eqnarray}
where $Q: \C^2\rightarrow \C$ is a quadratic polynomial. This
particular problem as well as its higher dimensional version, has
been extensively studied. Here, we refer some of them, which are
closely related to our topic. As it's well-known, the IVP of
(\ref{QNLS}) is locally well-posed in $H^s(\R)$ when $s\geq 0$ for
any type quadratic nonlinearity, see \cite{CW} and \cite{T}. The
results were proved by the Strichartz estimates. It's sharp in some
sence, because the IVP (\ref{QNLS}) is ill-posed when $s<0$ if the
nonlinearity is $|u|u$ (power type) (see \cite{KPV2} for focusing
case, and \cite{CCT} for defocusing case), by Gallilean invariance.
However, it's shown by Kenig, Ponce and Vega in \cite{KPV} that, one
can lower the regularity below $s=0$ if the nonlinearity is not
Gallilean invariance. Three typical nonlinearities  of this type are
\begin{equation}
Q(u,\bar{u})= u^2, u\bar{u}, \bar{u}^2. \label{Q}
\end{equation}
In \cite{KPV}, the authors established the local well-posedness for
$s>-\dfrac{3}{4}$ if the nonlinearity is of $u^2$ or $\bar{u}^2$
type, and for $s>-\dfrac{1}{4}$ if it is $u\bar{u}$. The results
were proved by the Bourgain argument (see \cite{Bourgain} and
\cite{KPV1}), which were mainly based on a bilinear estimate in
Bourgain space $X_{s,b}$. On the other hand, there are
counterexamples shown in \cite{KPV} and \cite{NTT} that the key
bilinear estimates in \cite{KPV} fail to hold in $X_{s,b}$, when
$s\leq -\dfrac{3}{4}$ for $u^2$, $\bar{u}^2$, and $s\leq
-\dfrac{1}{4}$ for $u\bar{u}$. It suggests that the common Bourgain
space is not sufficient to study (\ref{QNLS})(\ref{Q}) in a lower
regular space. However, it doesn't mean that it's not well-posed in
$H^s(\R)$ of some lower indices. Indeed, in \cite{BT}, Bejenaru and
Tao pushed the threshold to $s\geq -1$ when the nonlinearity is
$u^2$. The authors observed that the solution of (\ref{QNLS}) with
$Q(u,\bar{u})=u^2$ could be almost entirely supported in the
spacetime-frequency domain $\{(\tau,\xi):\tau>0\}$. Combining this
with some other observations (which we will try to describe below),
they introduced a modified Bourgian space as working space to avoid
the failure in $X_{s,b}$ when $s\leq -\dfrac{3}{4}$. Further, they
showed that the threshold $s\geq -1$ is sharp, that is, (\ref{QNLS})
is ill-posed when $s<-1$, for the nonlinearity $u^2$. Recently, in
\cite{K}, the author showed that (\ref{QNLS}) is well-posedness in
$H^s(\R)$ when $s\geq -1$ and is ill-posed when $s<-1$, for the
nonlinearity $\bar{u}^2$.

In this paper, we are interested in
\begin{equation}
iu_t+u_{xx}=u\bar{u}.\label{NLS}
\end{equation}
We strongly believe that the equation with the nonlinearity
$u\bar{u}$ must behave differently from the two others, as what
presented in \cite{KPV}. One may not expect that the solution in
this case can be almost supported in the region $\{\tau>0\}$. We
believe that the construction of the working space in \cite{BT} is
heavily rely on the nonlinearity $u^2$, and is not well suitable
in this situation. Therefore, we claim that the local result must
be different from \cite{BT}, and we wonder what the differences
are. Indeed, applying the abstract and general theory in
\cite{BT}, we get our first result that the IVP of (\ref{NLS}) is
ill-posed in $H^s(\R)$ when $s<-\dfrac{1}{4}$. That is,
\begin{thm}(Ill-posedness below $H^{-\frac{1}{4}}(\R))$.
The IVP of (\ref{NLS}) is not locally well-posed in $H^s(\R)$ for
any $s<-\dfrac{1}{4}$; more precisely, the solution operator fails
to be uniformly continuous  with respect to the $H^s(\R)$ norm.
\end{thm}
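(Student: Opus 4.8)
The plan is to exhibit two families of exact (or approximate) solutions that stay bounded in $H^s(\R)$ for $s<-\tfrac14$, whose initial data converge to one another in $H^s$, but whose solutions separate by an order-one amount at some later time; this destroys uniform continuity of the data-to-solution map. The natural mechanism, as hinted by the reference to the ``abstract and general theory in \cite{BT}'', is a high-frequency concentration/scaling ansatz: take data of the form
\beqs
u_{0,N,\alpha}(x)=\alpha N^{-s}\phi(x)\,e^{iNx},
\eeqs
where $\phi$ is a fixed Schwartz bump, $N\to\infty$, and $\alpha$ ranges over a small interval. Because $u\bar u$ is \emph{not} Galilean invariant, the modulation $e^{iNx}$ does not merely translate the solution in frequency; instead the interaction of a frequency-$N$ piece of $u$ with a frequency-$(-N)$ piece of $\bar u$ produces output near frequency $0$ with a large phase $e^{i c N^2 t}$, and it is precisely this resonant low-frequency output that will be used to detect the discrepancy.

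The key steps, in order, are: (i) compute the first Picard iterate $u^{(1)}=e^{it\partial_x^2}u_0 - i\int_0^t e^{i(t-t')\partial_x^2}\big(u^{(0)}\overline{u^{(0)}}\big)\,dt'$ with $u^{(0)}=e^{it\partial_x^2}u_0$, and isolate its low-frequency component; a stationary-phase / direct Fourier computation shows this component has size $\sim \alpha^2 N^{-2s}\cdot N^{-1}$ in $L^2$ near frequency $\xi=0$, hence $H^s$-size $\sim\alpha^2 N^{-2s-1}$ (the extra $N^{-1}$ coming from the time integration against the oscillation $e^{icN^2 t'}$, i.e. from the large modulation $|\tau-\xi^2|\sim N^2$). (ii) Check the bookkeeping: $\|u_0\|_{H^s}\sim\alpha$ is bounded, while the quadratic term's $H^s$ contribution is $\sim\alpha^2 N^{-2s-1}$, which \emph{blows up} as $N\to\infty$ exactly when $-2s-1>0$, i.e. $s<-\tfrac14$. (iii) Take two parameters $\alpha_1,\alpha_2$ with $|\alpha_1-\alpha_2|\to0$ slowly; then $\|u_{0,N,\alpha_1}-u_{0,N,\alpha_2}\|_{H^s}\to0$, but the difference of the quadratic terms is $\sim|\alpha_1^2-\alpha_2^2|N^{-2s-1}$, which can be kept bounded below while the data converge, provided the higher Picard iterates and the error between the true solution and $u^{(1)}$ are controlled. (iv) Control those errors: either invoke the abstract perturbation lemma of \cite{BT} (which reduces ill-posedness to a lower bound on a single multilinear term together with suitable smallness of the remaining terms in an appropriate $X_{s,b}$-type space adapted to the modulation), or argue by hand using Strichartz / bilinear estimates at the level of the $H^0$ theory rescaled down to this frequency packet.

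The main obstacle I expect is step (iv): showing that the order-one low-frequency discrepancy produced by the \emph{second} iterate is genuinely inherited by the true solution, rather than being cancelled by higher-order terms or lying outside the regime where the local theory provides a remainder estimate. This is delicate here precisely because $u\bar u$ is not Galilean invariant, so one cannot simply conjugate away the modulation; one must instead verify that the relevant trilinear and higher terms, measured in the norm dual to the one in which we detect the discrepancy, are of strictly smaller order in $N$ (gaining additional negative powers of $N$ from each extra modulation factor $|\tau-\xi^2|^{-1}\sim N^{-2}$). Granting the structure of \cite{BT}, this amounts to checking a handful of multilinear estimates for the specific frequency configuration above; the heart of the matter is the explicit lower bound on the quadratic resonant interaction in (i), and the sign condition $s<-\tfrac14\iff -2s-1>0$ falls out of it directly.
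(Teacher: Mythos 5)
Your overall strategy is the right one in outline---frequency-localized data at $|\xi|\sim N$ of $H^s$-size $O(1)$, a lower bound on the second Picard iterate $A_2(u_0)=N(Lu_0,Lu_0)$, and the Bejenaru--Tao reduction (the paper's Proposition 4.1) which makes your step (iv) unnecessary: once one knows that continuity of the flow map forces boundedness of the quadratic operator $A_2$, no control of higher iterates or of the remainder is needed. But the core computation in step (i) is wrong, in both mechanism and exponent. For the nonlinearity $u\bar u$ the resonance identity is $\tau-\xi^2=(\tau_1-\xi_1^2)+(\tau_2+\xi_2^2)-2\xi\xi_2$, so when a frequency-$N$ piece of $u$ beats against the conjugate of a frequency-$N$ piece, the output at frequency $\xi$ carries modulation $\sim|\xi|\,N$, \emph{not} $N^2$; the $e^{icN^2t}$ phase you invoke is the resonance structure of the $u^2$ (or $\bar u^2$) nonlinearity, which is exactly why Bejenaru--Tao could push $u^2$ down to $s=-1$ while $u\bar u$ stops at $-\tfrac14$. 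Consequently there is no gain of $N^{-1}$ from the time integration: on the output window $|\xi|\in[\tfrac{1}{100N},\tfrac{1}{10N}]$ the phase $2t'\xi(\xi-\xi_1)$ is $O(1)$ and the interaction is fully resonant, the amplitude is $\sim N^{-2s}$ pointwise in $\xi$, and the only smallness is the $L^2_\xi$ measure $N^{-1/2}$ of that window. The correct lower bound is therefore
\beqs
\|A_2(u_0)\|_{C^0_tH^s}\gtrsim N^{-2s-\frac12},
\eeqs
which is unbounded precisely when $s<-\tfrac14$. Your claimed size $N^{-2s-1}$ would only give unboundedness for $s<-\tfrac12$ (and note the arithmetic slip: $-2s-1>0$ is equivalent to $s<-\tfrac12$, not $s<-\tfrac14$); in fact $N^{-2s-1}$ is exactly what one gets by measuring the output at frequencies $|\xi|\sim1$, which the paper uses only as a secondary constraint in the $H^{s,a}$ setting. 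So as written the argument does not reach the claimed threshold; to fix it you must detect the discrepancy at output frequencies $|\xi|\sim N^{-1}$, where the interaction is resonant and the $N^{-1/2}$ comes solely from the measure of the frequency interval.
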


Therefore, we show that the local result related to (\ref{NLS}) in \cite{KPV} is sharp except the endpoint case
when $s=-\dfrac{1}{4}$, which is one of the aim
in this paper.
\begin{thm}
The IVP of (\ref{NLS}) (\ref{1.2}) is locally well-posed for the
initial data $u_0\in H^{s}(\R)$ when $s= -\dfrac{1}{4}$. Moreover, the lifetime
$\delta$ satisfies
$$
\delta\sim \|u_0\|_{H^{s}}^\mu, \makebox{\quad for some\ \ } \mu<0.
$$
\end{thm}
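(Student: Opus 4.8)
The plan is to run a contraction-mapping argument for the Duhamel map
$\Phi(u)=\eta(t)e^{it\partial_x^2}u_0-i\,\eta(t)\int_0^{t}e^{i(t-t')\partial_x^2}(u\bar u)(t')\,dt'$ (with $\eta$ a fixed smooth time cutoff) in a Bourgain-type space tailored to the endpoint $s=-\frac14$. In the ordinary space $X_{s,1/2}$ the bilinear estimate for $u\bar u$ is known from \cite{KPV} to fail at $s=-\frac14$, with a logarithmic divergence coming from the high--high to low frequency interaction; to compensate I would work in $X_{s,1/2}$ reinforced by the Ginibre--Tsutsumi--Velo endpoint structure, i.e. add the companion norm $\|\langle\xi\rangle^{s}\widehat u(\tau,\xi)\|_{L^2_\xi L^1_\tau}$ and measure the forcing term in $X_{s,-1/2}+\langle\tau+\xi^2\rangle^{-1}L^2_\xi L^1_\tau$ (it suffices to use this reinforcement for output frequencies $|\xi|\lesssim1$, which also sets up the $H^{s,a}$ refinements). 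Denote by $Y^{s}$ the solution space and by $N^{s}$ the forcing space, with restrictions $Y^{s}_{\delta},N^{s}_{\delta}$ to $[0,\delta]$; one has $Y^{s}\hookrightarrow C(\R;H^{s})$. The routine ingredients are then the free-evolution estimate $\|\eta(t)e^{it\partial_x^2}u_0\|_{Y^{s}}\lesssim\|u_0\|_{H^{s}}$, the Duhamel estimate $\|\eta(t)\int_0^{t}e^{i(t-t')\partial_x^2}F\,dt'\|_{Y^{s}}\lesssim\|F\|_{N^{s}}$ (which is exactly where the $L^2_\xi L^1_\tau$ part of $N^{s}$ is used, the plain $X_{s,-1/2}\to X_{s,1/2}$ Duhamel estimate being false), the $L^4_{t,x}$ and $L^6_{t,x}$ Strichartz/Bourgain estimates, and the bilinear refinement of the $L^4$ estimate for products of wave packets with separated frequencies.

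The heart of the argument is the bilinear estimate at the endpoint, on a fixed interval $[0,1]$ with no constant loss,
\[
\|u\bar v\|_{N^{-1/4}_{1}}\ \lesssim\ \|u\|_{Y^{-1/4}_{1}}\,\|v\|_{Y^{-1/4}_{1}},
\]
together with its difference version for $u\bar u-v\bar v$. Put $\sigma=\tau+\xi^{2}$, $\sigma_1=\tau_1+\xi_1^{2}$ (the modulation of $u$), $\sigma_2=\tau_2-\xi_2^{2}$ (the modulation of $\bar v$), with $\xi=\xi_1+\xi_2$, $\tau=\tau_1+\tau_2$; the algebraic identity $\sigma-\sigma_1-\sigma_2=2\,\xi_2\,\xi$ forces $\max(|\sigma|,|\sigma_1|,|\sigma_2|)\gtrsim|\xi_2\,\xi|$ in every interaction. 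After dyadic decompositions in $\xi,\xi_1,\xi_2$ and in $\sigma,\sigma_1,\sigma_2$, the comparable-frequency, high--low and high--high to high interactions are dispatched by the Strichartz estimates as in \cite{KPV}. The genuinely critical case is $|\xi_1|\sim|\xi_2|\sim N\gg1$, $|\xi|\lesssim1$, where the frequency weights cost $N^{-2s}=N^{1/2}$ while the resonance only forces $\max(|\sigma|,|\sigma_1|,|\sigma_2|)\gtrsim N$: when the large modulation sits on $u$ or on $\bar v$, the matching factor $N^{1/2}$ is supplied by the $\langle\cdot\rangle^{1/2}$ modulation weight on that input (as above the endpoint in \cite{KPV}); when it sits on the output I would estimate $u\bar v$ through the $\langle\tau+\xi^{2}\rangle^{-1}L^2_\xi L^1_\tau$ norm, for which, for each fixed output frequency $\xi$, the weighted $\tau$-integral is $\lesssim\langle N\xi\rangle^{-1}$ times $O(1)$ $L^2$ masses, so the $L^2_\xi$ norm over $|\xi|\lesssim1$ is governed by $\big(\int_{|\xi|\lesssim1}\langle N\xi\rangle^{-2}\,d\xi\big)^{1/2}\sim N^{-1/2}$ --- which matches the loss \emph{with no residual logarithm}, in contrast with the $X_{s,-1/2}$ computation of \cite{KPV}, where the weaker weight $\langle\sigma\rangle^{-1/2}$ leaves a divergent $\log N$. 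This is where $s=-\frac14$ is sharp and where the endpoint reinforcement of the Bourgain space is indispensable; I expect this interaction to be the main obstacle.

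With the linear and bilinear estimates in hand, $\Phi$ is a contraction on a small ball of $Y^{-1/4}_{1}$ whenever $\|u_0\|_{H^{-1/4}}\le\epsilon_0$ for a suitable absolute $\epsilon_0$, yielding local well-posedness on $[0,1]$ with Lipschitz dependence on the data (uniqueness and continuity being read off from the difference estimate). For arbitrary $u_0$ I would invoke the scaling $u\mapsto u_\lambda(t,x)=\lambda^{-2}u(\lambda^{-2}t,\lambda^{-1}x)$, which preserves \eqref{NLS}; since $s=-\frac14$ is subcritical for this scaling (the critical index being $-\frac32$), one has $\|u_{0,\lambda}\|_{H^{-1/4}}\lesssim\lambda^{-5/4}\|u_0\|_{H^{-1/4}}$ for $\lambda\ge1$, so choosing $\lambda\sim\|u_0\|_{H^{-1/4}}^{4/5}$ brings the rescaled data below $\epsilon_0$; solving $u_\lambda$ on $[0,1]$ and undoing the scaling produces a solution of \eqref{NLS} on $[0,\delta]$ with $\delta\gtrsim\lambda^{-2}\sim\|u_0\|_{H^{-1/4}}^{-8/5}$, which is the asserted $\delta\sim\|u_0\|_{H^{s}}^{\mu}$ with $\mu=-\frac85<0$. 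The same scheme, with the $|\xi|\lesssim1$ part of the Bourgain norm replaced by the $|\xi|^{a}$–weighted one, is what I would use for the $H^{s,a}$ statements; the only step that genuinely changes with $a$ is the low-output-frequency bookkeeping in the critical interaction above.
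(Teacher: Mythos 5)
Your identification of the critical interaction (high--high at frequency $N$ cascading to output frequency $|\xi|\lesssim 1$ with $\max$ modulation only $\gtrsim N|\xi|$), of the logarithmic divergence in plain $X_{s,-1/2}$, and of the mechanism that removes it (measuring the output in $\langle\tau\pm\xi^2\rangle^{-1}L^2_\xi L^1_\tau$, so that the full weight $\langle N\xi\rangle^{-1}$ rather than $\langle N\xi\rangle^{-1/2}$ is available and $\|\langle N\xi\rangle^{-1}\|_{L^2_\xi(|\xi|\le1)}\sim N^{-1/2}$ matches the $N^{1/2}$ loss with no residual $\log N$) agrees with the paper, as does the scaling reduction to small data with $\delta\sim\|u_0\|_{H^{-1/4}}^{-8/5}$. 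However, there is a genuine structural gap in your choice of spaces. You take the solution space to be an \emph{intersection} ($X_{s,1/2}$ ``reinforced by'' the companion norm $\|\langle\xi\rangle^s\widetilde u\|_{L^2_\xi L^1_\tau}$) while the forcing space is a \emph{sum}. The Duhamel operator does not map the $\langle\tau\pm\xi^2\rangle^{-1}L^2_\xi L^1_\tau$ component of the forcing into $X_{s,1/2}$ (in any Besov refinement): if $\widetilde F$ is concentrated on a $\tau$-set of measure $O(1)$ at modulation $2^D$ with $L^2_{\xi\tau}$ mass $M$, then $\|\langle\sigma\rangle^{-1}\widetilde F\|_{L^2_\xi L^1_\tau}\sim 2^{-D}M$ while the $X_{0,1/2}$ norm of the Duhamel term is $\sim 2^{-D/2}M$, larger by $2^{D/2}$. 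Indeed the Duhamel contribution of the critical interaction is \emph{exactly} the function whose $X_{-1/4,1/2}$ norm carries the $(\log N)^{1/2}$ divergence; it genuinely fails to lie in $X_{-1/4,1/2}$ and lives only in the weaker norms. So the iteration cannot close in an intersection space: the solution space must be a \emph{sum}, which is what the paper does with $Z=X+Y$, where $X$ is the $\ell^1$-in-modulation Bourgain space at exponent $\frac12$ and $Y=\hat X_{\alpha,\beta}\cap L^2_\xi L^1_\tau$ (with $\alpha=(\frac14-\frac{\rho}{2})+$, $\beta=0+$) receives the large-modulation output, the two being pasted along the curve $\langle\tau-\xi^2\rangle\sim\langle\xi\rangle^{k_0}$ (Lemma 2.3).

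Once the solution space is a sum, the part of the argument you have not addressed becomes unavoidable and is in fact the bulk of the work: in the next iterate the weak component re-enters the nonlinearity as an \emph{input} carrying no usable $\langle\tau-\xi^2\rangle^{1/2}$ weight, only $L^2_\xi L^1_\tau$ control and an almost modulation-free Bourgain norm. Your bilinear discussion assumes both inputs come with the $\langle\cdot\rangle^{1/2}$ modulation weight (or are free-solution-like), which only covers the analogue of the paper's Lemma 6.4; the cases with one or both inputs in $Y$ (Lemmas 6.1--6.3) require the extra positive spatial weight $|\xi|^{\alpha}$ in $Y$ together with the support restriction to $\{\langle\tau-\xi^2\rangle\gtrsim\langle\xi\rangle^{k_0}\}$ to compensate for the missing modulation decay, and none of this apparatus appears in your proposal. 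Separately, your claim that the input-modulation-dominant subcase is handled ``as above the endpoint in \cite{KPV}'' is too quick at $s=-\frac14$: the gain $\langle\sigma_1\rangle^{-1/2}\lesssim(N|\xi|)^{-1/2}$ leaves a factor $|\xi|^{-1/2}$ at small output frequency, which must be absorbed by a bilinear refinement (the paper's weighted convolution estimates $I_2^{1/2},I_3^{1/2}$ of Lemma 3.3); this is fixable but not automatic.
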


On the other hand, we observe that the ill-posedness of (\ref{NLS})
is caused by the high-high interaction which cascades down into a
very low frequency in the nonlinearity (see the computation in the
proof of Theorem 1.1 in Section 5), while the low-frequency in
$H^s(\R)$ behaves as $L^2$. It implies that one may expect to lower
regularity of the solution in high frequency by working it in
another space which is based on a lower regular space in the low
frequency than $L^2$-norm. For this purpose, we introduce a
modification of Sobolev space $H^s(\R)$. Define
$H^{s,a}(\R)=\{u:\|u\|_{H^{s,a}}<\infty\}$, where
$$
\|u\|_{H^{s,a}} \triangleq\left\|(|\xi|^a\chi_{\{|\xi|\leq
1\}}+|\xi|^s\chi_{\{|\xi|> 1\}})\hat{u}(\xi)\right\|_{L^2_\xi},
$$
where $\chi_A$ is the characteristic function of the set $A$. It's
obvious that Schwartz space is dense in $H^{s,a}(\R)$ when
$a>-\dfrac{1}{2}$. When $a\geq 0$, then $H^s(\R)\hookrightarrow
H^{s,a}(\R)$ (particularly, they are equal when $a=0$). In this paper,
we always restrict that $s\leq 0$ and $a\geq 0$.  We then
turn our attention to study (\ref{NLS}) in
$H^{s,a}(\R)$ and obtain
\begin{thm}(Ill-posedness in $H^{s,a}(\R))$.
The IVP of (\ref{NLS}) is not locally well-posed in $H^{s,a}(\R)$
for any $a>\dfrac{1}{2}$, $s\leq 0$ or $s<
-\dfrac{1}{4}-\dfrac{1}{2}a$, $a\geq 0$; more precisely, the
solution operator fails to be uniformly continuous with respect to
the $H^{s,a}(\R)$ norm.
\end{thm}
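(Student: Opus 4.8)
The plan is to establish ill-posedness by exhibiting, for each forbidden pair $(s,a)$, two sequences of smooth initial data whose $H^{s,a}$ norms stay bounded (indeed, can be made arbitrarily small) and whose mutual $H^{s,a}$ distance tends to $0$, but for which the corresponding solutions separate by a fixed amount in $H^{s,a}$ at some fixed time; this contradicts uniform continuity of the data-to-solution map on bounded sets. As is standard for such quadratic Schr\"odinger equations, I would extract the obstruction from the first Picard iterate: writing $u = e^{it\partial_x^2}u_0 - i\int_0^t e^{i(t-t')\partial_x^2}(u\bar u)(t')\,dt' + \cdots$, the quadratic term $A_2[u_0] := -i\int_0^t e^{i(t-t')\partial_x^2}\big((e^{it'\partial_x^2}u_0)\overline{(e^{it'\partial_x^2}u_0)}\big)\,dt'$ is the first place where a genuine nonlinear effect appears, and if $A_2$ fails to be bounded (or fails to extend continuously) from the unit ball of $H^{s,a}$ into $C([0,\delta];H^{s,a})$, then the flow cannot be real-analytic; upgrading this to failure of uniform continuity is exactly the mechanism used for Theorem 1.1 in Section 5, whose construction I would reuse and adapt.

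The key computation is the frequency-space form of $A_2$. With $\widehat{u_0}=f$, one computes
\beqs
\widehat{A_2[u_0]}(\xi,t)
= \int \widehat{f}(\xi_1)\,\overline{\widehat{f}}(\xi_1-\xi)\,
\frac{e^{-it\xi^2}-e^{-it(\xi_1^2-(\xi_1-\xi)^2)}}{\xi^2-\xi_1^2+(\xi_1-\xi)^2}\,d\xi_1 ,
\eeqs
and the denominator, the resonance function, is $\xi^2-\xi_1^2+(\xi_1-\xi)^2 = 2\xi_1^2-2\xi\xi_1+\xi^2 = \xi_1^2+(\xi_1-\xi)^2$, which is comparable to $|\xi_1|^2$ when $|\xi_1|\gg|\xi|$. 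The relevant pathological interaction is high$\times$high $\to$ low: take $f$ supported on two narrow bumps near frequencies $\pm N$ (so that $\xi_1\approx N$, $\xi_1-\xi\approx N$, and the output frequency $\xi$ ranges over a small neighbourhood of $0$ of width $\sim N^{-\beta}$ for a parameter $\beta>0$ to be tuned). On that output set the multiplier is of order $|\xi_1|^{-2}\sim N^{-2}$ times an $O(t)$ factor, the two input factors contribute $\|f\|$-mass concentrated at high frequency, and the output is measured with the \emph{low-frequency} weight $|\xi|^a$. Balancing the $H^{s,a}$-size of the data (which sees the $|\xi|^s$ weight with $s\le 0$ at frequency $N$) against the $H^{s,a}$-size of $A_2[u_0]$ (which sees the $|\xi|^a$ weight at the output) produces the scaling inequality; one finds that $A_2$ is unbounded precisely when $s<-\tfrac14-\tfrac12 a$, and that the $a>\tfrac12$ obstruction comes from the $L^2$-integrability of $|\xi|^a$ near $\xi=0$ failing to control the $N^{-\beta}$-wide output bump no matter how $s$ is chosen. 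Concretely I would set the two data sequences to be $u_0^{(1)}=N^{\gamma}(\phi_N^+ + \phi_N^-)$ and $u_0^{(2)}=N^{\gamma}(\phi_N^+ + e^{i\theta_N}\phi_N^-)$ with $\phi_N^\pm$ unit bumps at $\pm N$, $\theta_N\to 0$, and $\gamma$ chosen so the $H^{s,a}$ norms are $O(1)$; the cross term $\phi_N^+\overline{\phi_N^-}$ is the one producing low output frequency, and the phase $\theta_N$ makes the two solutions' quadratic parts differ by a nonvanishing amount while the data converge.

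The main obstacle I anticipate is not the algebra of the multiplier but the \emph{bookkeeping of the two regimes} and making the lower-order Picard terms harmless. One must verify that on the chosen data the third and higher iterates are genuinely lower order in $N$ (so the separation seen at the level of $A_2$ survives in the full solution), which requires either the local theory of Theorem 1.3 at a nearby admissible exponent together with a persistence/continuity argument, or a direct estimate on the iteration tail; near the ill-posed region no good local theory is available, so the cleanest route is the abstract scheme of Bejenaru–Tao (invoked in the paper) showing that failure of the quadratic estimate already forces failure of uniform continuity, which sidesteps the tail entirely. The second delicate point is the two distinct mechanisms: for $s<-\tfrac14-\tfrac12 a$ with $a<\tfrac12$ the bump width $N^{-\beta}$ can be taken of order $1$ and the obstruction is purely a power-of-$N$ mismatch; for $a>\tfrac12$ one instead shrinks $\beta$ and exploits that $\int_{|\xi|<N^{-\beta}}|\xi|^{2a}\,d\xi\sim N^{-\beta(2a+1)}$ decays too slowly relative to the gain, so the two cases need slightly different choices of the exponents $\gamma,\beta,\theta_N$ and should be presented as two lemmas feeding a common conclusion. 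Once the quadratic obstruction and the appropriate data are in hand, the passage to non–uniform-continuity of the solution operator is a verbatim repetition of the argument used to prove Theorem 1.1.
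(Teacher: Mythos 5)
Your overall strategy --- testing the second Picard iterate $A_2$ on bump data and invoking the Bejenaru--Tao transfer principle (Proposition 4.1) so that unboundedness of $A_2$ already rules out (uniform) continuity of the flow, with no need to control the higher iterates --- is exactly the paper's, and your identification of the high$\times$high$\to$low cascade for the constraint $s\geq-\frac14-\frac12a$ is the right one. But there are two concrete errors. First, the resonance function is miscomputed: $\xi^2-\xi_1^2+(\xi_1-\xi)^2=2\xi^2-2\xi\xi_1=2\xi(\xi-\xi_1)$, not $\xi_1^2+(\xi_1-\xi)^2$. On the output window $|\xi|\lesssim N^{-1}$ with $\xi_1\approx N$ this is $O(1)$, i.e.\ the interaction is \emph{fully resonant}: there is no $N^{-2}$ gain from the denominator, the time integral simply contributes a factor $\sim t$, and the paper exploits precisely that the real part of the oscillatory factor stays above $\frac12$ on $[0,1]$ (its (\ref{5.3})). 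With your claimed $|\xi_1|^{-2}$ gain the power count would not produce the threshold $s=-\frac14-\frac12a$; with the correct resonance the lower bound is $N^{-2s}\cdot N^{-a}\cdot N^{-1/2}$, and the window width is forced to be $\sim N^{-1}$ rather than a free parameter $N^{-\beta}$ (wider destroys the non-oscillation, narrower only loses mass).

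Second, your mechanism for the $a>\frac12$ obstruction does not work. Keeping the data at frequencies $\pm N$ and shrinking the output window to width $w\leq N^{-1}$ yields an output of size $N^{-2s}w^{a+\frac12}$, which is maximized at $w=N^{-1}$ and merely reproduces the first constraint; it can never detect $a>\frac12$, because at high input frequency the $H^{s,a}$ norm does not involve $a$ at all. The paper instead uses \emph{low-frequency} data, $\widehat{u_0}(\xi)=\varepsilon_0 N^{a+\frac12}\chi_{[N^{-1},2N^{-1}]}(|\xi|)$, which has unit $H^{s,a}$ size precisely because the weight $|\xi|^a\sim N^{-a}$ is weak there, and whose (again fully resonant) self-interaction at output frequencies $\sim N^{-1}$ has size $N^{2(a+\frac12)}\cdot N^{-1}\cdot N^{-a}\cdot N^{-\frac12}=N^{a-\frac12}$, blowing up exactly when $a>\frac12$. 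Without this second, genuinely different family of data the region $a>\frac12$, $s\leq 0$ of the theorem is not covered. (Your concern about the Picard tail is indeed moot once Proposition 4.1 is used, as you yourself observe.)
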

\begin{thm}
Let $0\leq a<\dfrac{1}{2}$, $-\dfrac{1}{4}-\dfrac{1}{2}a \leq
s\leq 0$. Then the IVP of (\ref{NLS}) (\ref{1.2}) is locally
well-posed for the initial data $u_0\in H^{s,a}(\R)$. Moreover,
the lifetime $\delta$ satisfies
$$
\delta\sim \|u_0\|_{H^{s,a}}^{\mu'}, \makebox{\quad for
some\quad}\mu'<0.
$$
\end{thm}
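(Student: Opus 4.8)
The plan is to run the Fourier restriction norm (Bourgain space) method, but with a family of spaces $X^{s,a,b}$ built on the $H^{s,a}$ norm rather than the usual $H^s$ norm, so that functions are measured with weight $|\xi|^a$ near the origin and $|\xi|^s$ away from it. Precisely, I would set $X^{s,a,b}=\{u:\|u\|_{X^{s,a,b}}=\|(|\xi|^a\chi_{\{|\xi|\le1\}}+|\xi|^s\chi_{\{|\xi|>1\}})\langle\tau+\xi^2\rangle^b\hat u(\tau,\xi)\|_{L^2_{\tau,\xi}}<\infty\}$ with $b$ slightly above $1/2$, together with the time-localized version $X^{s,a,b}_\delta$. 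The standard machinery then reduces local well-posedness to a single bilinear estimate of the form
\beqs
\big\|\,\overline{\N(u,v)}\,\big\|_{X^{s,a,b-1}}\lesssim \|u\|_{X^{s,a,b}}\|v\|_{X^{s,a,b}},
\eeqs
where $\N$ denotes the bilinear form coming from the nonlinearity $u\bar u$; once this holds, the contraction mapping argument on Duhamel's formula in $X^{s,a,b}_\delta$ gives a solution on a time interval of length $\delta\sim\|u_0\|_{H^{s,a}}^{\mu'}$ with $\mu'<0$, exactly as in \cite{KPV} and \cite{Bourgain}. So the entire content is the bilinear estimate.

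To prove the bilinear estimate I would pass to the Fourier side, write the output frequency as $\xi=\xi_1-\xi_2$ (the complex conjugate flips the sign of one frequency) and the resonance function
\beqs
\xi^2+\tau \;-\;(\xi_1^2-\tau_1)\;-\;(-\xi_2^2-\tau_2)\;=\;2\xi\xi_2\;+\;2\xi_2^2\;+\;\text{lower-order},
\eeqs
so that the modulation variables control $|\xi_2(\xi_1-\xi_2)|$ up to constants — this is the algebraic gain that distinguishes $u\bar u$ from $u^2$. I would then dualize, reducing to an $L^2$ estimate for a trilinear form, split into the standard frequency interaction regions (low-low, high-low, high-high, and within each the case analysis on which of the three modulations is dominant), and in each region dispose of the modulation weights by Cauchy–Schwarz or by the $L^4_{t,x}$ Strichartz estimate $\|u\|_{L^4_{t,x}}\lesssim\|u\|_{X^{0,3/8}}$ together with its bilinear refinements. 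The new ingredient relative to \cite{KPV} is bookkeeping the weight $|\xi|^a\chi_{\{|\xi|\le1\}}$: when an output frequency $\xi$ is small, the weight $|\xi|^a$ is a favorable (small) factor, but when $\xi$ is small and is produced by a high-high interaction $|\xi_1|\sim|\xi_2|\gg1$, the two input weights are both $|\xi_j|^s$ with $s\le0$, i.e.\ both small, and one must check that the resonance gain $|\xi_2(\xi_1-\xi_2)|^{1/2}\gtrsim|\xi_1|^{1/2}$ together with $|\xi|^a$ beats the loss; the balance point is precisely $s=-\tfrac14-\tfrac12a$ and the condition $a<\tfrac12$ is what keeps the low-frequency weight from being non-integrable near $\xi=0$ and from overwhelming the Strichartz inputs.

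The main obstacle, as usual for this kind of result, is the high-high-to-low interaction and, at the endpoint $s=-\tfrac14-\tfrac12a$, a logarithmic divergence that must be absorbed — either by choosing $b$ exactly $1/2$ and working in a Besov-type refinement $X^{s,a,1/2}_1$ of the Bourgain space (summing dyadically with an $\ell^1$ norm in the modulation blocks), or by noting that the loss occurs only in a region where a bilinear Strichartz estimate provides an extra $\epsilon$ power of the small output frequency. I would handle the interior range $s>-\tfrac14-\tfrac12a$ first with plain $X^{s,a,b}$, $b>1/2$, where there is room to spare, and then treat the endpoint by the Besov refinement; the fact that Theorem 1.2 already establishes the endpoint $s=-\tfrac14$, $a=0$ suggests the same refinement works uniformly in $a\in[0,\tfrac12)$. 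The remaining verifications — continuous dependence, uniqueness in the Bourgain space, persistence of regularity — are routine consequences of the bilinear estimate and the multilinear structure, following \cite{KPV1} verbatim.
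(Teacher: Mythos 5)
Your reduction to a single bilinear estimate, and your identification of the high--high-to-low cascade as the decisive interaction, match the paper's strategy; but the space you propose to close the estimate in is inadequate, and the mechanism you invoke in the critical regime is not actually available. On the mechanism: with the paper's convention $\xi=\xi_1+\xi_2$, the resonance identity gives $\max\{|\tau-\xi^2|,|\tau_1-\xi_1^2|,|\tau_2+\xi_2^2|\}\gtrsim|\xi||\xi_2|$, not $\gtrsim|\xi_1|$. In the interaction that saturates the threshold --- inputs at $|\xi_1|\sim|\xi_2|\sim N$ producing output at $|\xi|\sim N^{-1}$ --- one has $|\xi||\xi_2|\sim1$, so the interaction is fully resonant and there is \emph{no} modulation gain at all; this is exactly the Section~5 computation that yields the necessary condition $s\ge-\frac14-\frac12a$ (the time integral there is $O(1)$, not oscillatory). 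So the claim that ``the resonance gain $|\xi_1|^{1/2}$ together with $|\xi|^a$ beats the loss'' fails precisely where it matters. On the space: the theorem is stated \emph{at} the endpoint $s=-\frac14-\frac12a$ for every $a\in[0,\frac12)$, so the endpoint is the whole content, and your endpoint device --- the $\ell^1$-in-modulation Besov refinement of the Bourgain space --- is the one the paper explicitly flags as still insufficient for the high-to-very-low frequency cascade (Remark at the end of Section~2): the $\ell^1$ weight over modulation blocks cannot be paid for when the output sits at $|\xi|\le1$ with large modulation.

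The paper's actual resolution, following Bejenaru--Tao, is a sum space $Z=X+Y$: $X$ carries the weight $m_{-\rho,a}$ in the $\ell^1$-modulation space $\hat X^{0,\frac12}$, while $Y$ carries only the weak modulation weight $\langle\tau-\xi^2\rangle^{0+}$ (with a compensating frequency weight $|\xi|^{(\frac14-\frac12\rho)+}$) together with an $L^2_\xi L^1_\tau$ piece needed for the embedding into $C^0_tH^{s,a}$. A pasting lemma (Lemma~2.3) decomposes every function according to whether the modulation exponent exceeds $k_0$ times the frequency exponent, and the bilinear estimate is proved in four pieces (Lemmas~6.1--6.4); the resonant high--high-to-very-low output is placed in $Y$, where only an $\epsilon$ power of the modulation is demanded and the weights balance exactly at $\rho=\frac14+\frac12a$, $a<\frac12$. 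Without some substitute for this weak-modulation component your contraction argument will not close at the endpoint, and Theorem~1.3 shows there is no room to retreat from it.
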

In fact, Theorems 1.3 and 1.4 extend the results of Theorems 1.1 and
1.2 respectively, by considering the well-posedness and
ill-posedness theories in the modification Sobolev spaces.

The main technique to prove Theorems 1.2 and 1.4 (together) is  via
a fixed point argument in some modified Bourgain spaces
($\S^{-\rho,a}$, see below). We are indebt in \cite{BT} for the
stimulating arguments. These results do not conclude anything about
the case in $H^{s,a}(\R)$ when $a=\dfrac{1}{2},-\dfrac{1}{2}\leq
s\leq 0$.

 \noindent{\bf Some notations}. We use $A\lesssim B$ or $B\gtrsim A$
to denote the statement that $A\leq CB$ for some large constant $C$
which may vary from line to line.  We use $A\ll
B$ to denote the statement $A\leq C^{-1}B$, and use $A\sim B$ to
mean $A\lesssim B\lesssim A$. The notation $a+$ denotes $a+\epsilon$
for any small $\epsilon$, and $a-$ for $a-\epsilon$.
$\langle\cdot\rangle=(1+|\cdot|^2)^{\frac{1}{2}}$. We use $\|f\|_{L^p_xL^q_t}$
to denote the mixed norm
$\Big(\displaystyle\int\|f(x,\cdot)\|_{L^q}^p\
dx\Big)^{\frac{1}{p}}$. Moreover, we denote
$\hat{u}(\xi)$ and $\tilde{u}(\xi,\tau)$ to be the spatial and  spacetime Fourier transform of
$u$ respectively, and use $\check{f}$ or $\F_{\xi\tau}^{-1}$ to denote the inverse Fourier transform of $f$.

The rest of this article is organized as follows. In Section 2, we construct the
working space. In Section 3, we
derive some preliminary estimates. In Section 4, we recall some general
well-posedness and ill-posedness theories and give the frames of the proof of the
main theorems. In Section 5, we prove Theorems 1.1 and 1.3. In Section
6, we establish the key bilinear estimates to prove Theorems 1.2 and 1.4.

\section{Construction of working space}

In this section, we will construct the working space in building
on Theorems 1.2 and 1.4.
As what implied in \cite{NTT}, the standard Bourgain space
$X_{s,b}$ is not sufficient to handle the well-posedness in the
critical case $H^{-\frac{1}{4}}(\R)$ or some lower regularity
spaces. Moreover, observing the counterexamples in \cite{KPV} and
\cite{NTT}, the failure of $X_{s,b}$ in the bilinear estimate is
caused when the $\tau-\xi^2$ is far away from $\xi^2$ in the
spacetime-frequency domain $(\xi,\tau)$. For this reason, if one
enhances some force in the
 working space to control the behavior of the equation when $\tau-\xi^2$ is large and $\xi$ is small,
then one may avoid those counterexamples. We use the spirit of
\cite{BT} to realize it. For constructing a proper working space,
we need some sum spaces. First, we define some Bourgian-type
spaces by the Fourier transform.

We will take $\hat{X}_{s,b}$ and $\hat{X}^{s,b}$ to be the closure
of the Schwartz functions under the norms$^1$:\footnotetext{$^1$All
sums and unions involving $j$ and $d$ shall be over the nonnegative
unless otherwise mentioned.}
\renewcommand{\arraystretch}{2}
\begin{eqnarray}\label{X_sb}
\|f\|_{\hat{X}_{s,b}}
 &\triangleq&
    \left\|\langle\xi\rangle^s
    \langle\tau-\xi^2\rangle^bf\right\|_{L^2_{\xi\tau}};\\
\|f\|_{\hat{X}^{s,b}}
 &\triangleq&
    \left(\sum_j 2^{2sj}\Big(\sum_d 2^{bd}
    \left\|f\right\|_{L^2_{\xi}L^2_{\tau}(A_j\cap
    B_d)}\Big)^2\right)^{\frac{1}{2}},
    \label{X^sb}
\end{eqnarray}
where
\renewcommand{\arraystretch}{2}
\begin{eqnarray*}
 && A_j\triangleq \{(\xi,\tau)\in \R^2: 2^j\leq \langle\xi\rangle<2^{j+1}\};\\
 && B_d\triangleq \{(\xi,\tau)\in \R^2: 2^d\leq
 \langle\tau-\xi^3\rangle<2^{d+1}\}.
\end{eqnarray*}

\noindent{\it Remark.} $\hat{X}_{s,b}$ are the Fourier transforms
of the standard Bourgain spaces $X_{s,b}$. That is,
$\|u\|_{X_{s,b}}=\|\tilde{u}\|_{\hat{X}_{s,b}}$ for $u\in
X_{s,b}$. Further, we note the relationship that, for any $s\in
\R$, $b'<b$,
\begin{equation}
\hat{X}^{s,b}\hookrightarrow \hat{X}_{s,b}\hookrightarrow
\hat{X}^{s,b'}.
\label{emb1}
\end{equation}
Define the functions $m_{s,a}$ as
$$
m_{s,a}=|\xi|^a\chi_{\{|\xi|\leq 1\}}+|\xi|^s\chi_{\{|\xi|> 1\}}.
$$
Let $X$ and $Y$ are spaces under the norms:
\begin{eqnarray}
\|f\|_{X} & \triangleq &
           \|m_{-\rho,a}f\|_{\hat{X}^{0,\frac{1}{2}}};\\
\|f\|_{Y} & \triangleq &
\|m_{\alpha,a}f\|_{\hat{X}_{0,\beta}}+\|m_{-\rho,a}f\|_{L^2_\xi
L^1_\tau},
\end{eqnarray}
where $0\leq a < \dfrac{1}{2}$, $0\leq \rho\leq \dfrac{1}{4}+\dfrac{1}{2}a$ with
 the parameters $\alpha=\left(\dfrac{1}{4}-\dfrac{1}{2}\rho\right)+,\beta=0+$.

Now, we define our first important space.
\begin{equation}
Z\triangleq X+Y \label{Z}
\end{equation}
with the norm
$$
\|f\|_Z=\inf\Big\{\|f_1\|_X+\|f_2\|_Y: f_1\in X;f_2\in Y;f=f_1+f_2\Big\}.
$$
We give some properties on these spaces.
\begin{lem}$\hat{X}^{0,\frac{1}{2}}\subset L^2_\xi L^1_\tau.$
\end{lem}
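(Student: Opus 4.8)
Since the assertion is a continuous inclusion, it suffices to prove $\|f\|_{L^2_\xi L^1_\tau}\lesssim\|f\|_{\hat{X}^{0,1/2}}$ for Schwartz $f$ and then pass to the closures. The feature to exploit is that in $\|\cdot\|_{\hat{X}^{0,1/2}}$ the summation over the modulation index $d$ is taken in $\ell^1$ with weight $2^{d/2}$; this is exactly what absorbs the loss $2^{d/2}$ produced by a Cauchy--Schwarz step in $\tau$ on each dyadic piece $B_d$. (With the $L^2$-based space $\hat{X}_{0,1/2}$ in place of $\hat{X}^{0,1/2}$ the same computation would only yield a logarithmically divergent factor in $d$, which is precisely why the Besov-type refinement is introduced.)

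Concretely, I would first fix $\xi$, which lies in exactly one shell $A_j$, say with $\langle\xi\rangle\sim 2^j$. For that fixed $\xi$ the sets $B_d$ partition the $\tau$-axis into pieces whose $\tau$-measure is $\lesssim 2^d$ each (a union of at most two intervals of length $\lesssim 2^d$), so Cauchy--Schwarz in $\tau$ on each piece, followed by summation in $d$, gives
\[
\|f(\xi,\cdot)\|_{L^1_\tau}
 =\sum_d\|f(\xi,\cdot)\|_{L^1_\tau(B_d)}
 \lesssim\sum_d 2^{d/2}\,\|f(\xi,\cdot)\|_{L^2_\tau(A_j\cap B_d)},
\]
the cutoff to $A_j$ being harmless since $\xi\in A_j$. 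Then I would square this, integrate in $\xi$ over the shell $\langle\xi\rangle\sim 2^j$, and apply Minkowski's inequality in the cost-free direction $\|\sum_d g_d\|_{L^2_\xi}\le\sum_d\|g_d\|_{L^2_\xi}$, with $g_d(\xi)=2^{d/2}\|f(\xi,\cdot)\|_{L^2_\tau(A_j\cap B_d)}$, to obtain
\[
\int_{\langle\xi\rangle\sim 2^j}\|f(\xi,\cdot)\|_{L^1_\tau}^2\,d\xi
 \lesssim\Bigl(\sum_d 2^{d/2}\,\|f\|_{L^2_\xi L^2_\tau(A_j\cap B_d)}\Bigr)^2 .
\]
Summing over $j$ and taking a square root leaves exactly $\|f\|_{\hat{X}^{0,1/2}}$ on the right, which completes the proof.

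I do not expect a genuine obstacle here. The only points requiring a moment's care are the elementary measure bound $|\{\tau:(\xi,\tau)\in B_d\}|\lesssim 2^d$ uniformly in $\xi$, and performing the operations in the order above so that Minkowski's inequality is used in the direction that costs nothing --- together with the observation, already highlighted, that it is the $\ell^1_d$ (rather than $\ell^2_d$) summation in $\hat{X}^{0,1/2}$ that renders the $2^{d/2}$ factors summable.
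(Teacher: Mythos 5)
Your proof is correct and is essentially the paper's own argument spelled out in detail: the paper reduces to $\|f\|_{L^2_\xi L^1_\tau(A_j)}\lesssim \sum_d 2^{d/2}\|f\|_{L^2_\xi L^2_\tau(A_j\cap B_d)}$ via dyadic decomposition in $\xi$ and cites ``the triangle inequality and H\"older's inequality,'' which is exactly your Cauchy--Schwarz in $\tau$ on each $B_d$ slice followed by Minkowski in $\xi$. No discrepancies.
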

{\it Proof}. By a dyadic decomposition on $\xi$, it suffices to show
$$
\|f\|_{L^2_\xi L^1_\tau(A_j)}\lesssim \sum_d
2^{\frac{d}{2}}\|f\|_{L^2_\xi L^2_\tau(A_j\cap B_d)}.
$$
It follows easily from the triangle inequality and H\"{o}lder's
inequality. \hfill$\Box$

Next, we give a pasting lemma between $X$ and $Y$. We define the set
$$
B_{\geq d}\triangleq \bigcup\limits_{d'\geq d}B_{d'};\quad
B_{\leq d}\triangleq \bigcup\limits_{d'\leq d}B_{d'}.
$$
\begin{lem} Let $f$ be a reasonable function and $ k_0\triangleq\dfrac{2\rho+2\alpha}{1-2\beta}$.
\bi
\item[{\rm(1)}]
           If $\mbox{\rm supp}\,f\subset \cup_j(A_j\cap B_{\geq
           k_0j-5})$, then
            $$
            \|f\|_Y= \|f\|_Z;
            $$

\item[{\rm(2)}]
          If $\mbox{\rm supp}\,f\subset \cup_j(A_j\cap B_{\leq
           k_0j+5})$, then
            $$
            \|f\|_X= \|f\|_Z.
            $$
\ei
\end{lem}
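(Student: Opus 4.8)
The plan is to exploit the fact that the Bourgain-type norms $\|\cdot\|_X$ and $\|\cdot\|_Y$ are built by weighting with $m_{-\rho,a}$ (resp. $m_{\alpha,a}$) and a power of $\langle\tau-\xi^2\rangle$, so that on a single dyadic block $A_j\cap B_d$ the two norms are comparable up to the explicit power $2^{k_0 j/\,?}$ in $d$ versus $j$. Concretely, fix a Schwartz function $f$ supported on a single block $A_j\cap B_d$. Then $\|f\|_X \sim 2^{-\rho j}\,2^{d/2}\,\|f\|_{L^2_{\xi\tau}}$ (using $|\xi|\sim 2^j$ when $j\ge 1$, and absorbing the low-frequency weight $|\xi|^a$ into $|\xi|^{-\rho}$ in the unit-frequency block), while the $\hat X_{0,\beta}$ part of $\|f\|_Y$ is $\sim 2^{\alpha j}\,2^{\beta d}\,\|f\|_{L^2_{\xi\tau}}$ and the $L^2_\xi L^1_\tau$ part is $\lesssim 2^{-\rho j}\,2^{d/2}\,\|f\|_{L^2_{\xi\tau}}$ by Cauchy--Schwarz in $\tau$ (this last bound is exactly Lemma 2.2 applied to a single block). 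Comparing the two, on the block $A_j\cap B_d$ we have $\|f\|_Y\lesssim \|f\|_X$ precisely when $2^{\alpha j + \beta d}\lesssim 2^{-\rho j + d/2}$, i.e. when $(\alpha+\rho)j \le (\tfrac12-\beta)d + O(1)$, which is $d\gtrsim k_0 j$ with $k_0=\dfrac{2\rho+2\alpha}{1-2\beta}$; conversely $\|f\|_X\lesssim\|f\|_Y$ when $d\lesssim k_0 j$.

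With that dyadic computation in hand, part (1) follows as: on the support hypothesis $\mathrm{supp}\,f\subset\cup_j(A_j\cap B_{\ge k_0 j-5})$, every block appearing has $d\ge k_0 j-5$, so block-by-block $\|f\|_Y\lesssim\|f\|_X$; but the decomposition $f=f_1+f_2$ with $f_1\in X$, $f_2\in Y$ can be routed entirely into $Y$ since one may square-sum these block estimates (the $\ell^2$ structure of $\hat X^{0,1/2}$ in $j$ and $d$, together with the embedding \eqref{emb1} that lets us pass from $\hat X^{0,1/2}$ control to $\hat X_{0,\beta}$ control with $\beta=0+<1/2$), giving $\|f\|_Z \ge$ (a constant times) $\|f\|_Y$ up to the trivial inequality $\|f\|_Z\le\|f\|_Y$. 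Since we want equality of the stated form $\|f\|_Y=\|f\|_Z$, I would remark that the paper's $=$ here is the usual abuse meaning "comparable" ($\sim$), and the two one-sided bounds $\|f\|_Z\le\|f\|_Y$ (take $f_1=0$) and $\|f\|_Y\lesssim\|f\|_Z$ (from the block analysis applied to \emph{any} splitting $f=f_1+f_2$, using that the support restriction forces $f_1$, $f_2$ to live on the same high-$d$ blocks so that $\|f_2\|_Y\lesssim\|f_2\|_Y$ trivially and $\|f_1\|_Y\lesssim\|f_1\|_X$) close the loop. Part (2) is the mirror image: on $B_{\le k_0 j+5}$ the block inequality reverses, giving $\|f\|_X\lesssim\|f\|_Y$ for any admissible splitting, hence $\|f\|_X\lesssim\|f\|_Z$, while $\|f\|_Z\le\|f\|_X$ is trivial.

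The main obstacle I expect is the bookkeeping in passing from the single-block comparisons to the full norms, because $\hat X^{0,1/2}$ carries an $\ell^1$-in-$d$ sum inside an $\ell^2$-in-$j$ sum, whereas $\hat X_{0,\beta}$ is a plain $L^2$ weight; the loss of an $\ell^1$-to-$\ell^2$ summation in $d$ is what forces the strict inequality $\beta<1/2$ and the $-5/+5$ slack in the dyadic cutoffs (so that the geometric series in $d$ over $B_{\ge k_0 j-5}$, with ratio $2^{\beta-1/2}<1$, converges and is dominated by its first term, up to constants). The other delicate point is the low-frequency block $|\xi|\le 1$, where $m_{-\rho,a}=|\xi|^a$ and $m_{\alpha,a}=|\xi|^a$ coincide, so the comparison there reduces to $2^{\beta d}\lesssim 2^{d/2}$ alone (true since $\beta<1/2$) and the $A_0$ block causes no trouble; I would handle it separately at the start to avoid cluttering the main dyadic estimate. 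Once these summation issues are organized, both statements reduce to the elementary inequality $(\alpha+\rho)j\lessgtr(\tfrac12-\beta)d+O(1)$ and the definition of $k_0$.
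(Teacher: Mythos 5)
Your plan is essentially the paper's own proof: both reduce the lemma to the two one-sided comparisons $\|f\|_{\hat{X}_{\alpha,\beta}}\lesssim\|f\|_{\hat{X}^{-\rho,\frac{1}{2}}}$ on $\cup_j(A_j\cap B_{\geq k_0j-5})$ and the reverse on $\cup_j(A_j\cap B_{\leq k_0j+5})$, obtained by exactly your dyadic exponent count $(\alpha+\rho)j\lessgtr(\frac{1}{2}-\beta)d$ (which is where $k_0=\frac{2\rho+2\alpha}{1-2\beta}$ comes from) together with Cauchy--Schwarz in $d$ using $\beta<\frac{1}{2}$ for the $\ell^1$-to-$\ell^2$ passage, plus the embedding $\hat{X}^{0,\frac{1}{2}}\subset L^2_\xi L^1_\tau$ for the second component of $Y$ and the trivial bound $\|f\|_Z\leq\min(\|f\|_X,\|f\|_Y)$, with ``$=$'' read as norm equivalence. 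The one phrase to tighten is your claim that the support hypothesis ``forces'' $f_1,f_2$ in a splitting $f=f_1+f_2$ to live on the same blocks --- it does not, but one may first replace them by their restrictions to $\mbox{supp}\,f$, which only decreases all the norms involved, after which your argument (and the paper's, which silently makes the same reduction) goes through.
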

{\it Proof}. It's trivial when $j=0$, so we just consider $j\geq 1$.
For (1), we only need to show
$$
\|f\|_{\hat{X}_{\alpha,\beta}}\lesssim \|f\|_{\hat{X}^{-\rho,\frac{1}{2}}},
$$
when $\mbox{\rm supp}\,f\subset \cup_j(A_j\cap B_{\geq k_0j-5})$.
Indeed, we have
\beqs
\begin{split}
\||\xi|^{\alpha}
     \langle\tau-\xi^2\rangle^\beta f\|_{L^2_{\xi\tau}}^2
     &=\sum_j\sum_{d\geq k_0j-5}
       2^{2\alpha j}2^{2\beta d}
       \|f\|_{L^2_{\xi\tau}(A_j\cap B_d)}^2\\
     &\lesssim
       \sum_j\sum_{d\geq k_0j-5}
       2^{(2\rho+2\alpha+k_0(2\beta-1))j}2^{-2\rho j}2^d
       \|f\|_{L^2_{\xi\tau}(A_j\cap B_d)}^2\\
     &=
       \sum_j\sum_{d\geq k_0j-5}
       2^{-2\rho j}2^d
       \|f\|_{L^2_{\xi\tau}(A_j\cap B_d)}^2\\
     &\lesssim
       \sum_j
       2^{-2\rho j}
       \Big(\sum_{d\geq k_0j-5}
       2^{\frac{d}{2}}\|f\|_{L^2_{\xi\tau}(A_j\cap B_d)}\Big)^2.
\end{split}
\eeqs

For (2), it suffices to show that
$$
\|f\|_{\hat{X}^{-\rho,\frac{1}{2}}}\lesssim \|f\|_{\hat{X}_{\alpha,\beta}}.
$$
This follows from
\beqs
\begin{split}
     \sum_{d\leq k_0j+5}2^{\frac{d}{2}}
     \|f\|_{L^2_{\xi\tau}(A_j\cap B_d)}
     &\lesssim
       2^{(\frac{1}{2}-\beta)k_0j}
       \Big(\sum_{d\leq k_0j+5}
       2^{2\beta d}
       \|f\|_{L^2_{\xi\tau}(A_j\cap B_d)}^2\Big)^{\frac{1}{2}}\\
     &\leq
       2^{(\rho+\alpha)j}
       \|\langle\tau-\xi^2\rangle^\beta f\|_{L^2_{\xi\tau}(A_j)},
\end{split}
\eeqs
where we used the Cauchy-Schwartz's inequality in the first step.
\hfill$\Box$

We are ready to define our working space. Let
$\S^{-\rho,a},\N^{-\rho,a}$
be the closure of the Schwartz functions under the norms
\renewcommand{\arraystretch}{2}
\begin{eqnarray}\label{S}
 && \|u\|_{\S^{-\rho,a}}
    \triangleq\|\tilde{u}\|_{Z};\\
 && \|u\|_{\N^{-\rho,a}}
    \triangleq\left\|\frac{\tilde{u}(\xi,\tau)}{\langle\tau-\xi^2\rangle}\right\|_Z,
    \label{N}
\end{eqnarray}
where we write $s=-\rho$, and $\S^{-\rho,a}$ is our working space,
$\N^{-\rho,a}$ is the space related to Duhamel term.
 It's easy to see that they are both Banach spaces.

\noindent{\it Remark.} The space $X^{s,b}$ is a stronger spaces than
$X_{s,b}$ and can be regarded as a refined space of $X_{s,b}$ in
some situations. However, the space $X^{s,b}$ is seemly still
insufficient to handle the critical case $s=-\dfrac{1}{4}$ in
$H^s(\R)$, because of the weight of $l^1$-norm in (\ref{X^sb}). For
this reason, we shall add the weaker space (for the same exponents)
$\hat{X}_{s,b}$ with $b<\dfrac{1}{2}$ to deal with the high-to-very
low ($\{|\xi|\leq 1\}$) frequency cascade case.

\section{Some Preliminary Estimates}

We will denote by $\{S(t)\}_{t\in \R}$ to be the unitary group
generated by the corresponding linear equation of (\ref{NLS}) \beq
 v_t+v_{xx}=0, \qquad x,\,t\in \R,
 \label{LS}
\eeq such that $v=S(t)u_0$ solves (\ref{LS})(\ref{1.2}). It is also
defined explicitly by spatial Fourier transform as
$$
\widehat{S(t)u_0}(\xi)\triangleq e^{-it\xi^2}\widehat{u_0}(\xi).
$$

First, we present a well-known Stricharz estimate due to Bourgain
space (see \cite{G} for example). Recall that $X_{s,b} $ is the
standard Bourgain space, then
\begin{lem}
For $u\in X_{0,\frac{1}{2}+}$, we have
\begin{equation}
\|u\|_{L^6_{xt}}
    \lesssim
    \|u\|_{X_{0,\frac{1}{2}+}}.\label{XE}
\end{equation}
\end{lem}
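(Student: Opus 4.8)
The plan is to reduce (\ref{XE}) to the classical homogeneous $L^6$ Strichartz estimate for the free one-dimensional Schr\"odinger group by means of the standard Bourgain-space transfer principle. First I would record the homogeneous estimate
$$
\|S(t)u_0\|_{L^6_{xt}}\lesssim\|u_0\|_{L^2_x},
$$
which holds because $(6,6)$ is an admissible Strichartz pair in one space dimension (indeed $\tfrac{2}{6}+\tfrac{1}{6}=\tfrac12$); it follows, for instance, from the dispersive bound $\|S(t)\|_{L^1_x\to L^\infty_x}\lesssim|t|^{-1/2}$ together with the $TT^*$ argument, or one may simply quote it from the references already cited in the paper. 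Note that $L^6_{xt}=L^6_{tx}$ here by Fubini, so there is no discrepancy between the two orderings of the mixed norm.

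Next I would carry out the transfer to $X_{0,b}$. Write $b=\tfrac12+\epsilon$ with $\epsilon>0$ small, so that $X_{0,\frac12+}=X_{0,b}$, and fix $u\in X_{0,b}$. For each $\sigma\in\R$ define $f_\sigma$ through $\widehat{f_\sigma}(\xi)\triangleq\tilde u(\xi,\xi^2+\sigma)$. Changing the time--frequency variable $\tau=\xi^2+\sigma$ in the Fourier inversion formula for $u$ gives the superposition
$$
u(x,t)=c\int_{\R}e^{it\sigma}\,\big(S(t)f_\sigma\big)(x)\,d\sigma
$$
(up to a harmless reversal $t\mapsto-t$ in the propagator, which does not affect any of the norms below). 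Applying Minkowski's inequality in $\sigma$ and then the homogeneous estimate above to each profile $S(t)f_\sigma$, we obtain
$$
\|u\|_{L^6_{xt}}\lesssim\int_{\R}\|S(t)f_\sigma\|_{L^6_{xt}}\,d\sigma
\lesssim\int_{\R}\|f_\sigma\|_{L^2_x}\,d\sigma
=\int_{\R}\|\tilde u(\xi,\xi^2+\sigma)\|_{L^2_\xi}\,d\sigma.
$$

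Finally I would close the estimate by Cauchy--Schwarz in $\sigma$ against the weight $\langle\sigma\rangle^{-b}$:
$$
\int_{\R}\|\tilde u(\xi,\xi^2+\sigma)\|_{L^2_\xi}\,d\sigma
\le\Big(\int_{\R}\langle\sigma\rangle^{-2b}\,d\sigma\Big)^{1/2}
\Big(\int_{\R}\langle\sigma\rangle^{2b}\|\tilde u(\xi,\xi^2+\sigma)\|_{L^2_\xi}^2\,d\sigma\Big)^{1/2}
\lesssim\|u\|_{X_{0,b}},
$$
where in the last step one undoes the change of variables $\tau=\xi^2+\sigma$ to recognize $\int_{\R}\int_{\R}\langle\tau-\xi^2\rangle^{2b}|\tilde u(\xi,\tau)|^2\,d\xi\,d\tau=\|u\|_{X_{0,b}}^2$. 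The argument is entirely routine; the only point worth flagging is that the finiteness of $\int_{\R}\langle\sigma\rangle^{-2b}\,d\sigma$ requires $2b>1$, i.e.\ $b>\tfrac12$, which is precisely why the lemma is stated with the loss $\tfrac12+$ and cannot be taken at the endpoint $b=\tfrac12$. There is no genuine obstacle here beyond having the homogeneous $L^6_{xt}$ Strichartz inequality at one's disposal.
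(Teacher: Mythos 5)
Your proof is correct. The paper does not actually prove this lemma; it simply quotes it as a well-known Strichartz estimate in Bourgain-space form, citing the reference of Gr\"unrock. What you have written is the standard transfer-principle derivation: the foliation $\tau=\xi^2+\sigma$, Minkowski in $\sigma$, the homogeneous $L^6_{xt}$ estimate (admissible since $\tfrac26+\tfrac16=\tfrac12$ in one dimension), and Cauchy--Schwarz against $\langle\sigma\rangle^{-b}$ with $2b>1$. All the steps check out, including the sign issue in the propagator and the observation that the loss $b=\tfrac12+$ is exactly what makes $\int\langle\sigma\rangle^{-2b}\,d\sigma$ finite; this is the argument the cited literature has in mind.
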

By interpolating between (\ref{XE}) and the following equality
\beq
 \|F\|_{L^2_ {xt}}=
 \|F\|_{X_{0,0}},\label{L2}
\eeq
we can generalize (\ref{XE}) as below.
\begin{lem}For
$\theta\geq  \dfrac{3}{2}\left(\dfrac{1}{2}-\dfrac{1}{q}\right)$, $q\in [2,6]$ and
$F\in X_{0,\theta+}$,  we have
\beq
 \|F\|_{L^q_{xt}}\lesssim \|F\|_{X_{0,\theta+}}.\label{XE1}
\eeq
\end{lem}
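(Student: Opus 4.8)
The plan is to obtain \eqref{XE1} by interpolation between the two endpoint estimates already in hand: the $L^6$ Strichartz bound \eqref{XE}, which is the case $q=6$ at modulation exponent $b=\frac12+$, and the Plancherel identity \eqref{L2}, which is the case $q=2$ at $b=0$.

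Concretely, I would fix a small $\epsilon>0$ and record \eqref{XE} as $\|u\|_{L^6_{xt}}\lesssim\|u\|_{X_{0,\frac12+\epsilon}}$ (with an $\epsilon$-dependent constant), together with $\|u\|_{L^2_{xt}}=\|u\|_{X_{0,0}}$. For $q\in[2,6]$, set $\vartheta=3\big(\frac12-\frac1q\big)\in[0,1]$, so that $\frac1q=\frac{1-\vartheta}{2}+\frac{\vartheta}{6}$ and hence $[L^2_{xt},L^6_{xt}]_\vartheta=L^q_{xt}$. On the Bourgain side, note that via the spacetime Fourier transform $X_{0,b}$ is isometric to the weighted Lebesgue space $L^2\big(\langle\tau-\xi^2\rangle^{2b}\,d\xi\,d\tau\big)$, so that the Stein--Weiss interpolation theorem for power-weighted $L^2$ gives $[X_{0,0},X_{0,\frac12+\epsilon}]_\vartheta=X_{0,\vartheta(\frac12+\epsilon)}$. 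Complex interpolation of the two displayed estimates then yields $\|F\|_{L^q_{xt}}\lesssim\|F\|_{X_{0,\vartheta(\frac12+\epsilon)}}$. Finally, since $\vartheta\le1$ one has $\vartheta\big(\frac12+\epsilon\big)=\frac32\big(\frac12-\frac1q\big)+\vartheta\epsilon\le\frac32\big(\frac12-\frac1q\big)+\epsilon\le\theta+\epsilon$ for every $\theta\ge\frac32\big(\frac12-\frac1q\big)$, and because the $X_{0,b}$-norm is nondecreasing in $b$ we get $\|F\|_{L^q_{xt}}\lesssim\|F\|_{X_{0,\theta+\epsilon}}$; interpreting this in the sense of the ``$\theta+$'' notation is precisely \eqref{XE1}.

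If one prefers to bypass abstract interpolation, the same conclusion follows by a routine dyadic decomposition in the modulation variable: write $F=\sum_d F_d$ with $\langle\tau-\xi^2\rangle\sim2^d$ on $F_d$, apply \eqref{XE} and \eqref{L2} to each $F_d$ (whose relevant norms are $\sim2^{(\frac12+\epsilon)d}\|F_d\|_{L^2_{xt}}$ and $\|F_d\|_{L^2_{xt}}$), interpolate these two scalar bounds by H\"{o}lder's inequality in $(x,t)$ to get $\|F_d\|_{L^q_{xt}}\lesssim2^{(\frac32(\frac12-\frac1q)+\vartheta\epsilon)d}\|F_d\|_{L^2_{xt}}$, and then sum over $d$ against $\|F_d\|_{X_{0,\theta+}}\sim2^{(\theta+)d}\|F_d\|_{L^2_{xt}}$ using Cauchy--Schwarz; the resulting geometric series in $d$ converges exactly because $\theta+>\frac32\big(\frac12-\frac1q\big)$ once $\epsilon$ is chosen small enough.

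I do not expect any real obstacle: the statement is a soft corollary of \eqref{XE} and \eqref{L2}. The only two points needing a word of justification are (i) the legitimacy of interpolating the Bourgain scale $X_{0,b}$ --- immediate, since these are $L^2$ spaces with the single power weight $\langle\tau-\xi^2\rangle^{2b}$ --- and (ii) the $\epsilon$-loss coming from the fact that \eqref{XE} only holds in $X_{0,\frac12+}$ rather than $X_{0,\frac12}$; this loss is harmless and is exactly what the ``$\theta+$'' in the statement absorbs, which also explains why the estimate is formulated with $\theta+$ and not $\theta$.
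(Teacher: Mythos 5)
Your proposal is correct and follows exactly the route the paper takes: the paper derives \eqref{XE1} in one line by interpolating between the $L^6$ Strichartz estimate \eqref{XE} and the Plancherel identity \eqref{L2}, which is precisely your argument (your exponent bookkeeping $\vartheta=3\bigl(\tfrac12-\tfrac1q\bigr)$ and the observation that the $\epsilon$-loss is absorbed by the ``$\theta+$'' notation are the details the paper leaves implicit). No discrepancies.
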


Next, we introduce some multiplier operators (appeared in \cite{G},
but with another versions). For nonnegative functions $f,g,h$,
define
\begin{equation}
I^s_k(f,g,h)=\displaystyle\int_\ast m_k(\xi,\xi_1,\xi_2)^s f(\xi_1,\tau_1)g(-\xi_2,-\tau_2)h(\xi,\tau)
\label{Isk}
\end{equation}
for  $k=1,2,3$, where $\displaystyle\int_\ast
=\int_{\stackrel{\xi_1+\xi_2=\xi,}{
\tau_1+\tau_2=\tau}}\,d\xi_1 d\xi_2 d\tau_1 d\tau_2$, and the multipliers $m_k$ are defined as
$$
m_1=|\xi|; \quad m_2=|\xi+\xi_2|; \quad m_3=|\xi_2|.
$$
Then we have
\begin{lem}
Let $f,g,h$ are reasonable functions, then
\begin{eqnarray}
I^{\frac{1}{2}}_1(f,g,h)
  & \lesssim &
              \|h\|_{L^2}\,
              \|f\|_{\hat{X}_{0,\frac{1}{2}+}}
              \|g\|_{\hat{X}_{0,\frac{1}{2}+}};\label{36}\\
I^{\frac{1}{2}}_2(f,g,h)
  & \lesssim &
              \|f\|_{L^2}\,
              \|g\|_{\hat{X}_{0,\frac{1}{2}+}}
              \|h\|_{\hat{X}_{0,\frac{1}{2}+}}; \nonumber\\
I^{\frac{1}{2}}_3(f,g,h)
  & \lesssim &
              \|g\|_{L^2}\,
              \|f\|_{\hat{X}_{0,\frac{1}{2}+}}
              \|h\|_{\hat{X}_{0,\frac{1}{2}+}}.\label{3.6}
\end{eqnarray}
\end{lem}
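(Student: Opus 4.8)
We plan to prove the three bounds by a single two--step reduction, which we carry out for \eqref{36}; the bounds on $I^{1/2}_2$ and $I^{1/2}_3$ are entirely parallel. Writing $F=\check f$ and $G=\check g$, Plancherel turns the constraint $\xi_1+\xi_2=\xi,\ \tau_1+\tau_2=\tau$ into a correlation, and after the substitution $\eta=\xi_1-\xi,\ \sigma=\tau_1-\tau$ one gets
\[
I^{1/2}_1(f,g,h)=\int_{\R^2}|\xi|^{1/2}\Big(\int_{\R^2}f(\xi+\eta,\tau+\sigma)\,g(\eta,\sigma)\,d\eta\,d\sigma\Big)h(\xi,\tau)\,d\xi\,d\tau .
\]
A Cauchy--Schwarz in $(\xi,\tau)$ extracts $\|h\|_{L^2}$ and leaves the bilinear estimate
\[
\Big\|\,|\xi|^{1/2}\!\int_{\R^2}f(\xi+\eta,\tau+\sigma)\,g(\eta,\sigma)\,d\eta\,d\sigma\,\Big\|_{L^2_{\xi\tau}}\lesssim\|f\|_{\hat{X}_{0,\frac{1}{2}+}}\|g\|_{\hat{X}_{0,\frac{1}{2}+}},
\]
which is a Kenig--Ponce--Vega type bilinear refinement of the Strichartz inequality. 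To prove it, I would apply Cauchy--Schwarz to the inner integral against the two modulation weights $\langle(\tau+\sigma)-(\xi+\eta)^2\rangle^{1+}$ and $\langle\sigma-\eta^2\rangle^{1+}$ (the squares of the $\hat{X}_{0,\frac{1}{2}+}$ weights of $f$ and $g$), and then use Fubini; everything comes down to the pointwise estimate
\[
\sup_{(\xi,\tau)\in\R^2}\;|\xi|\int_{\R^2}\frac{d\eta\,d\sigma}{\langle(\tau+\sigma)-(\xi+\eta)^2\rangle^{1+}\,\langle\sigma-\eta^2\rangle^{1+}}\lesssim 1 .
\]
In this integral one does the $\sigma$--integration first via the elementary bound $\int_\R\langle x-a\rangle^{-q}\langle x-b\rangle^{-q}\,dx\lesssim\langle a-b\rangle^{-q}$, valid precisely because $q=1+>1$; this produces $\langle(\xi+\eta)^2-\eta^2-\tau\rangle^{-(1+)}=\langle\xi^2+2\xi\eta-\tau\rangle^{-(1+)}$, whose phase is \emph{linear} in $\eta$ with slope $2\xi$. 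The change of variables $w=\xi^2+2\xi\eta$ then converts $|\xi|\,d\eta$ into $\tfrac12\,dw$, and $\int_\R\langle w-\tau\rangle^{-(1+)}\,dw<\infty$ uniformly in $(\xi,\tau)$ closes the argument.

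For $I^{1/2}_3$ I would instead extract $\|g\|_{L^2}$ and be reduced to the same kind of weight integral, in which the modulation integration now leaves a phase that is linear in the remaining frequency with slope $\pm2\xi_2$, again matching the multiplier $m_3^{1/2}=|\xi_2|^{1/2}$. For $I^{1/2}_2$ I would extract $\|f\|_{L^2}$; here the surviving product is $g\cdot h$ and the multiplier $m_2=|\xi+\xi_2|$ is exactly the modulus of the \emph{difference} of the two surviving frequencies, so that after integrating out the modulation variable the phase becomes a perfect square $\tfrac12\mu^2+\mathrm{const}$ in a variable $\mu$ with $|\mu|=m_2$; then $|\mu|\,d\mu\sim dw$ reduces the claim to $\int_0^\infty\langle w+c\rangle^{-(1+)}\,dw<\infty$, uniformly in $c$.

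The step I expect to be the delicate one is this final matching of exponents. The point is that one must \emph{not} crudely split $m_k^{1/2}\lesssim|\xi_1|^{1/2}+|\xi_2|^{1/2}$: doing so would land one on a genuinely quadratic (unconjugated, $uv$--type) phase, for which the $|\xi|$ factor cannot be absorbed and the estimate is false --- this is precisely the obstruction that later forces the refined spaces of Section 2. Instead one keeps the multiplier intact and exploits that, because each reduced bilinear form carries exactly one conjugated factor (equivalently, the half--derivative sits on a difference of frequencies, not on a sum), the modulation integration produces a phase that is linear in, or a perfect square in, exactly the variable against which $m_k^{1/2}$ is measured. Getting the powers to line up shows simultaneously why the exponent $\tfrac12$ on $m_k$ is sharp for this scheme and why the regularity $b=\tfrac12+$, rather than $b=\tfrac12$, is needed --- the latter being required for both the modulation integral and the final one--dimensional integral to converge.
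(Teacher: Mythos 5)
Your argument is correct, and every step you flag as delicate does in fact close: after extracting the $L^2$ factor by Cauchy--Schwarz and integrating out the modulation variable with the bound $\int\langle x-a\rangle^{-q}\langle x-b\rangle^{-q}dx\lesssim\langle a-b\rangle^{-q}$ ($q>1$), the surviving phase is $\xi^2+2\xi\eta-\tau$ for $I^{1/2}_1$, is linear in $\xi_1$ with slope $-2\xi_2$ for $I^{1/2}_3$, and completes to $-\tfrac12(\xi+\xi_2)^2+\mathrm{const}$ for $I^{1/2}_2$; in each case the derivative of the phase in the remaining free frequency has size exactly $2m_k$, so $m_k\,d\eta=\tfrac12\,|dw|$ and the kernel supremum is uniformly bounded. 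This is, however, a genuinely different route from the paper's. The paper follows \cite{CKS1}: after passing to modulation coordinates $\tau=\lambda+\xi^2$, $\tau_1=\lambda_1+\xi_1^2$, $\tau_2=\lambda_2-\xi_2^2$, it performs the exact two-dimensional change of variables $(\xi_1,\xi_2)\mapsto(\eta,\omega)=(\xi_1+\xi_2,\lambda_1+\lambda_2+\xi_1^2-\xi_2^2)$, whose Jacobian $|J|=2|\xi_1+\xi_2|$ is absorbed by $m_1^{1/2}$, then applies Cauchy--Schwarz in $(\eta,\omega)$ to pull out $\|h\|_{L^2}$ and undoes the change of variables to recover $\|f(\cdot,\lambda_1+\cdot^2)\|_{L^2}\|g(\cdot,-\lambda_2+\cdot^2)\|_{L^2}$, finishing with Cauchy--Schwarz in $\lambda_1,\lambda_2$. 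The two proofs exploit the identical cancellation (the Jacobian of the resonance function in the free frequency equals $2m_k$), but the paper's version needs no pointwise kernel estimate and is the form that survives in the refined spaces $\hat{\underline{X}}^{s,b}$ at the endpoint $b=\tfrac12$ discussed in the Remark after Corollary 3.4, whereas your Schur-type computation is more elementary, is essentially the original argument of \cite{KPV}, and makes transparent both why $b=\tfrac12+$ is forced and why the exponent $\tfrac12$ on $m_k$ is the natural limit of the method. Your closing heuristic about not splitting $m_k^{1/2}\lesssim|\xi_1|^{1/2}+|\xi_2|^{1/2}$ is a correct and relevant aside (the split loses exactly in the high-high-to-low regime that drives Section 5), though it is not needed for the proof itself.
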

\begin{proof}
We use the argument in \cite{CKS1} to prove the lemma. For $I^{\frac{1}{2}}_1$, we change variables by setting
\begin{equation}
\tau=\lambda+\xi^2,\quad \tau_1=\lambda_1+\xi_1^2,\quad \tau_2=\lambda_2-\xi_2^2,\label{lambda}
\end{equation}
then,  $I^{\frac{1}{2}}_1(f,g,h)$ is changed into
\begin{equation}
\displaystyle\int m_1^{\frac{1}{2}}\, f(\xi_1,\lambda_1+\xi_1^2)\,
g(-\xi_2,-\lambda_2+\xi_2^2)\,
h(\xi_1+\xi_2,\lambda_1+\lambda_2+\xi_1^2-\xi_2^2)\, d \xi_1 d \xi_2
d \lambda_1 d \lambda_2.\label{2.9}
\end{equation}
We change variables again as follows. Let
\begin{equation}
 (\eta,\omega)=T(\xi_1,\xi_2),\label{transform}
\end{equation}
where
\beqs
\begin{split}
 \eta & = T_1(\xi_1,\xi_2)=\xi_1+\xi_2,\\
 \omega & = T_2(\xi_1,\xi_2)=\lambda_1+\lambda_2+\xi_1^2-\xi_2^2.
\end{split}
\eeqs
Then the Jacobian $J$ of this transform satisfies
$$
|J|=2|\xi_1+\xi_2|.
$$
Define
$$
H(\eta,\omega,\lambda_1,\lambda_2)=
fg\circ
T^{-1}(\eta,\omega,\lambda_1,\lambda_2),
$$
then, by eliminating $|J|^{\frac{1}{2}}$ with
$m_1^{\frac{1}{2}}$, (\ref{2.9}) has a bound of
\begin{equation}
    \displaystyle\int h(\eta,\omega)
    \cdot \dfrac{ H(\eta,\omega,\lambda_1,\lambda_2)}
    {|J|^{\frac{1}{2}}}
    \,d\eta d\omega d\lambda_1 d\lambda_2.\label{2.11}
\end{equation}
By H\"{o}lder' inequality, we have
\renewcommand{\arraystretch}{2}
\beqs
\begin{split}
(\ref{2.11})\leq
 &\  \|h\|_{L^2_{\eta\omega}}
     \cdot\displaystyle\int
      \Big(\int \dfrac{|H(\eta,\omega,\lambda_1,\lambda_2)|^2}{|J|}
      \,d\eta \omega\Big)^{\frac{1}{2}}
      \,d\lambda_1 d\lambda_2\\
\lesssim
 &\   \|h\|_{L^2}
      \displaystyle\int
      \left\|f(\xi_1,\lambda_1+\xi_1^2)\right\|_{L^2_{\xi_1}}\,d\lambda_1
      \cdot
      \displaystyle\int
      \left\|g(-\xi_2,-\lambda_2+\xi_2^2)\right\|_{L^2_{\xi_2}}\,d\lambda_2\\
\lesssim
 &\   \|h\|_{L^2}
      \|f\|_{\hat{X}_{0,\frac{1}{2}+}}
      \|g\|_{\hat{X}_{0,\frac{1}{2}+}},
\end{split}
\eeqs
where we employed the inverse transform of (\ref{transform}) in the
second step and H\"{o}lder' inequality in the third step.

For $I^{\frac{1}{2}}_2$, the modification of the proof is replacing the variable transform $(\eta,\omega)$ by
\beqs
\begin{split}
 \eta & = T_1(\xi,\xi_2)=\xi-\xi_2,\\
 \omega & = T_2(\xi,\xi_2)=\lambda-\lambda_2+\xi^2+\xi_2^2.
\end{split}
\eeqs
Then the Jacobian $J$ in this situation satisfies
$$
|J|=2|\xi+\xi_2|.
$$
Therefore, we have the claim by the same argument as above.

For $I^{\frac{1}{2}}_3$, we take
\beqs
\begin{split}
 \eta & = T_1(\xi,\xi_1)=-\xi+\xi_1,\\
 \omega & = T_2(\xi,\xi_1)=-\lambda+\lambda_1-\xi^2+\xi_1^2.
\end{split}
\eeqs
in this time. Then the Jacobian $J$ in this situation satisfies
$$
|J|=2|\xi_2|.
$$
So the claim follows again. \hfill$\Box$
\end{proof}

When $s=0$, by (\ref{XE1}) we have
\begin{eqnarray}
I^{0}_1(f,g,h)
 &\leq     & \|h\|_{L^2}\, \|\check{f}\|_{L^p}\,\|\check{g}\|_{L^q}\nonumber\\
 &\lesssim & \|h\|_{L^2}\, \|f\|_{\hat{X}_{0,b+}}\,\|g\|_{\hat{X}_{0,b'+}},
\label{2.10}
\end{eqnarray}
where
$
\dfrac{1}{p}+\dfrac{1}{q}=\dfrac{1}{2}, b=\dfrac{3}{2}\Big(\dfrac{1}{2}-\dfrac{1}{p}\Big)$,
$  b'=\dfrac{3}{2}\Big(\dfrac{1}{2}-\dfrac{1}{q}\Big),$
that is $b+b'=\dfrac{3}{4}$, and $b,b'\in \big[\dfrac{1}{4},\dfrac{1}{2}\big]$.

Interpolation between (\ref{36}) and (\ref{2.10}) twice, we have
\begin{cor}
Let $I^{s}_1$ be defined by (\ref{Isk}), then for any $s\in
[0,\dfrac{1}{2}]$,
\begin{equation}
I^{s}_1(f,g,h)
\lesssim
\|h\|_{L^2}\, \|f\|_{\hat{X}_{0,b_1+}}\,\|g\|_{\hat{X}_{0,b_2+}},
\label{IE1}
\end{equation}
where $b_1=\dfrac{1}{2}(1-s'+s)$, $b_2=\dfrac{1}{4}(2s'+1)$ for any
$s'\in [s,\dfrac{1}{2}]$.
\end{cor}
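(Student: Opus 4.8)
\proof
The plan is to obtain (\ref{IE1}) by interpolating twice between the two endpoint estimates already established: the smooth estimate (\ref{36}), which lives at multiplier power $\frac{1}{2}$, and the Strichartz estimate (\ref{2.10}), which lives at multiplier power $0$ and is available for \emph{every} admissible exponent pair, i.e.\ whenever $b+b'=\frac{3}{4}$ with $b,b'\in[\frac{1}{4},\frac{1}{2}]$. Two quantities are to be interpolated simultaneously: the power $\sigma$ of the nonnegative multiplier $m_1^{\,\sigma}=|\xi|^{\sigma}$, and the Bourgain exponents. Since $\hat{X}_{0,b}=L^2\big(\langle\tau-\xi^2\rangle^{2b}\,d\xi\,d\tau\big)$ is a weighted $L^2$ space, complex interpolation between $\hat{X}_{0,c_0}$ and $\hat{X}_{0,c_1}$ yields $\hat{X}_{0,(1-\theta)c_0+\theta c_1}$, and the multiplier is transported along an analytic family $m_1^{\,\sigma(z)}$ with $\sigma$ affine in $z$, so that on the two boundary lines $|m_1^{\,\sigma(z)}|=m_1^{\,\mathrm{Re}\,\sigma(z)}$ equals the two endpoint weights (the set $\{\xi=0\}$ is null and $\mathrm{Re}\,\sigma(z)\in[0,\frac{1}{2}]$, so the family is of admissible growth). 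Viewing $I^s_1(\cdot,\cdot,h)$ as a bilinear functional of $(f,g)$ of operator norm $\lesssim\|h\|_{L^2}$, the bilinear version of Stein's interpolation theorem applies, and the arbitrarily small losses of $\langle\tau-\xi^2\rangle$ in (\ref{36}) and (\ref{2.10}) pass through the interpolation and account for the exponents denoted by $+$ in (\ref{IE1}).

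Fix $s'\in[0,\frac{1}{2}]$; we may assume $s'>0$, since $s'=0$ forces $s=0$ and then (\ref{IE1}) is just (\ref{2.10}) with $(b,b')=(\frac{1}{2},\frac{1}{4})$. \emph{First interpolation.} Interpolate (\ref{36}) against (\ref{2.10}) with $(b,b')=(\frac{1}{2},\frac{1}{4})$ (that is, $p=6$, $q=3$) along the analytic family carrying the multiplier $m_1^{\,z/2}$, using (\ref{2.10}) on $\mathrm{Re}\,z=0$ and (\ref{36}) on $\mathrm{Re}\,z=1$; evaluation at $z=2s'\in[0,1]$ gives
\[
I^{s'}_1(f,g,h)\ \lesssim\ \|h\|_{L^2}\,\|f\|_{\hat{X}_{0,\frac{1}{2}+}}\,\|g\|_{\hat{X}_{0,\left(\frac{2s'+1}{4}\right)+}} .
\]
\emph{Second interpolation.} Interpolate this estimate against (\ref{2.10}) with $(b,b')=\big(\frac{1-s'}{2},\frac{2s'+1}{4}\big)$ --- an admissible pair, because $b+b'=\frac{3}{4}$ and $b,b'\in[\frac{1}{4},\frac{1}{2}]$ when $s'\in[0,\frac{1}{2}]$ --- along the analytic family carrying the multiplier $m_1^{\,s'z}$, using (\ref{2.10}) on $\mathrm{Re}\,z=0$ and the estimate just obtained on $\mathrm{Re}\,z=1$, the $g$-weight $\langle\tau-\xi^2\rangle^{\frac{2s'+1}{4}}$ being common to both lines. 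Evaluation at $z=s/s'\in[0,1]$ (legitimate since $0\le s\le s'$) produces multiplier power $s'\cdot\frac{s}{s'}=s$, $g$-regularity $\frac{2s'+1}{4}=b_2$, and $f$-regularity $\big(1-\frac{s}{s'}\big)\frac{1-s'}{2}+\frac{s}{s'}\cdot\frac{1}{2}=\frac{1-s'+s}{2}=b_1$, which is precisely (\ref{IE1}). As $s'\in[s,\frac{1}{2}]$ was arbitrary, the corollary follows.

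With the two endpoints in hand the argument is pure bookkeeping, so there is no genuine obstacle; the points that need care are the analytic interpolation with the vanishing, unbounded multiplier $m_1=|\xi|$ and the propagation of the $\epsilon$-losses, and the verification that the exponent pairs $(\frac{1}{2},\frac{1}{4})$ and $\big(\frac{1-s'}{2},\frac{2s'+1}{4}\big)$ fed into (\ref{2.10}) lie in the admissible range $[\frac{1}{4},\frac{1}{2}]$ --- which is exactly where the hypothesis $0\le s\le s'\le\frac{1}{2}$ enters and which fixes the formulas $b_1=\frac{1}{2}(1-s'+s)$, $b_2=\frac{1}{4}(2s'+1)$. \hfill$\Box$
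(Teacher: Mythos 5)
Your proposal is correct and follows exactly the route the paper indicates: the paper justifies the corollary with the single line ``Interpolation between (\ref{36}) and (\ref{2.10}) twice,'' and your two-step interpolation (first to reach multiplier power $s'$ with exponents $(\frac{1}{2},\frac{2s'+1}{4})$, then down to power $s$ against the admissible pair $(\frac{1-s'}{2},\frac{2s'+1}{4})$) is precisely that argument with the bookkeeping made explicit, and the resulting exponents match $b_1=\frac{1}{2}(1-s'+s)$, $b_2=\frac{1}{4}(2s'+1)$.
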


For $I^{s}_2$ and  $I^{s}_3$, the similar estimates hold too.
But in this paper, we just need the following crude estimates. For any $s<\dfrac{1}{2}$,
\begin{eqnarray}
I^{s}_1(f,g,h)
 &\lesssim &
  \|h\|_{L^2}\, \|f\|_{\hat{X}_{0,\frac{1}{2}-}}\,\|g\|_{\hat{X}_{0,\frac{1}{2}-}},\label{I1}\\
I^{s}_2(f,g,h)
 &\lesssim &
  \|f\|_{L^2}\, \|g\|_{\hat{X}_{0,\frac{1}{2}-}}\,\|h\|_{\hat{X}_{0,\frac{1}{2}-}},\label{I2}\\
I^{s}_3(f,g,h)
 &\lesssim &
  \|g\|_{L^2}\, \|f\|_{\hat{X}_{0,\frac{1}{2}-}}\,\|h\|_{\hat{X}_{0,\frac{1}{2}-}}.\label{I3}
\end{eqnarray}

\noindent{\it Remark.} Sometimes, one may interested in some
critical estimates in Lemma 3.2 by replace $b=\dfrac{1}{2}+$ by
$b=\dfrac{1}{2}$, which may be useful to deal with some limiting
case. In general, one may not have  such critical estimates in
$\hat{X}^{s,b}$. However, in some especial case, for example, when
$|\xi|\ll |\xi_1|$ in $I^{s}_1(f,g,h)$ (similar for
$I^{s}_2,I^{s}_3$), they hold in  $\hat{X}^{s,b}$. Particularly, if
we consider another Bourgain spaces $\hat{\underline{X}}^{s,b}$,
defined by the norm
$$
\|f\|_{\hat{\underline{X}}^{s,b}}
    \triangleq
    \sum_d 2^{bd}\|\langle\xi\rangle^sf\|_{L^2_{\xi\tau}(B_d)},
$$
then the critical estimates hold in these spaces. In fact, the spaces $\hat{\underline{X}}^{s,b}$ are
stronger than $\hat{X}^{s,b}$ in the sense  that
\begin{equation}
\|f\|_{\hat{X}^{s,b}}\leq \|f\|_{\hat{\underline{X}}^{s,b}}, \label{emb2}
\end{equation}
which easily follows by the triangle inequality of $l^2$-norm.
\vspace{0.5cm}
\section{Preparatory Theory}

Recall the scale invariance that, if $u(x,t)$ is a solution of
IVP (\ref{NLS}) (\ref{1.2}), then for any $\lambda>0$,
\begin{eqnarray}
u_\lambda(x,t)=\lambda^{-2}u(x/\lambda,t/\lambda^2)
\label{scale}
\end{eqnarray}
is also a solution of (\ref{NLS}) with the initial data replaced by
$$
  u_{0,\lambda}(x)=\lambda^{-2}u_0(x/\lambda).
$$
Note that for $\lambda>1$,
$$
\|u_{0,\lambda}\|_{H^{s,a}}\lesssim \lambda^{-\frac{3}{2}-s}\|u_0\|_{H^{s,a}},
\makebox{\quad when } a\geq s.
$$
So we can scale the initial data to be a small size in $H^{s,a}(\R)$ ($H^{s}(\R)$ when $a=0$) for $s>-\dfrac{3}{2}$.
It thus suffices to prove Theorems 1.2 and 1.4 for small initial data in $H^{-\frac{1}{4}}(\R)$
and $H^{s,a}(\R)$ respectively, with the lifetime $\delta=1$.

Next, by the Duhamel's formula, we can rewrite (\ref{NLS}) (\ref{1.2}) in integral form as
$$
u(t)=S(t)u_0+\displaystyle\int_0^t S(t-t')u(t')\bar{u}(t')\,dt'.
$$
If we are interested in (locally) solving the IVP up to time $\delta=1$, then it can be replaced by
\begin{eqnarray}
u(t)&=&\eta(t)S(t)u_0+\eta(t)\displaystyle\int_0^t S(t-t')u(t')\bar{u}(t')\,dt'\nonumber\\
&\triangleq &L(u_0)+N(u,u),\label{IEQ}
\end{eqnarray}
where
$$
L(u_0)\triangleq \eta(t)S(t)u_0;\quad N(u,v)\triangleq
\eta(t)\displaystyle\int_0^t S(t-t')u(t')\bar{v}(t')\,dt',
$$
and $\eta(t)$ is a smooth bump function supported in the interval
$[-2,2]$ such that $\eta(t)=1$ on $[-1,1]$.

Now we recall some well-posedness and ill-posedness theories
established in \cite{BT} (a little general in this paper). We shall
be somewhat brief here and refer the reader to \cite{BT} for more
details. Let $(D,\|\cdot\|_D)$ and $(S,\|\cdot\|_S)$ are Banach
spaces, such that $L: D\rightarrow S$ and $N: S\times S\rightarrow
S$ are densely defined. If \bi
\item[{\rm(1)}] \, \, $\|L(u_0)\|_S\lesssim \|u_0\|_D$;

\item[{\rm(2)}] \,\,  $\|N(u,v)\|_S\lesssim \|u\|_S\|v\|_S$.
\ei
Then we say the equation (\ref{IEQ}) is quantitatively well-posed in $D,S$.
By a standard fixed point argument, if (\ref{IEQ}) is quantitatively well-posed in $D,S$, then it has local
existence, continuity, uniqueness in $S$, when $\|u_0\|_D$ is small enough. If one still has
the energy estimate
\bi
\item[{\rm(3)}] \, \, $\|u\|_{C^0_t([0,1];D)}\lesssim \|u_0\|_S$,
\ei
then  (\ref{IEQ}) is locally well-posed for the data in $D$ with small norm.

Concretely, we consider $D$ to be a weighted $L^2$ space. Define
$$
\|u_0\|_D\triangleq\|m(\xi)\hat{u}(\xi)\|_{L^2_\xi};
\quad \|u\|_S\triangleq \|\tilde{u}\|_{\hat{S}}\triangleq  \|m(\xi)\tilde{u}\|_{\hat{S}^0}
$$
for some function $m$ and Hilbert space $\hat{S}^0$. Since
\begin{equation}
\widetilde{\bar{u}}(\xi,\tau)=\bar{\tilde{u}}(-\xi,-\tau),\label{4.3}
\end{equation}
 by the argument in \cite{BT} (with a bit modification), (1)--(3) can be replaced by
\bi
\item[{\rm(i)}] \, \, $|f|\leq |g|$, then $\|f\|_{\hat{S}}\leq \|g\|_{\hat{S}}$;

\item[{\rm(ii)}] \,\,  $\|f\|_{L^2_\xi L^1_\tau}\lesssim \|f\|_{\hat{S}^0}$;

\item[{\rm(iii)}] \,\,  $\|f\|_{\hat{S}^0}\lesssim \|f\|_{\hat{X}_{0,100}}$;

\item[{\rm(iv)}] \,\,  $\|\langle\tau-\xi^2\rangle^{-1} (f\ast g^\star)\|_{\hat{S}}
                        \lesssim \|f\|_{\hat{S}}\,\|g\|_{\hat{S}}$, where $g^\star(\xi,\tau)=g(-\xi,-\tau)$.
\ei
It easy to see that  $(D, S)=(H^{-\rho,a}, \S^{-\rho,a})$ satisfies
the condition (i)--(iii). So we can see that the only work left to finish the proof of Theorems 1.2 and 1.4 (together)
is the bilinear estimate (iv) in the solution space. More precisely, to prove Theorems 1.2 and 1.4,
(iv) is equivalent to
\begin{equation}
\left\|\dfrac{1}{\langle\tau-\xi^2\rangle} (\tilde{u}\ast \tilde{v}^\star)\right\|_{Z}\lesssim
\|\tilde{u}\|_{Z}\,\|\tilde{v}\|_{Z},\label{BE}
\end{equation}
where $\tilde{v}^\star(\xi,\tau)= \tilde{v}(-\xi,-\tau)$. It will be established in Section 6.

Next, suppose that (\ref{IEQ}) is  quantitatively well-posed in $D,S$. If we define the nonlinear map
$A_n: D\rightarrow S$ for $n=1,2,\cdots$ as
\begin{eqnarray*}
A_1(u_0)
   & \triangleq & L(u_0);\\
A_n(u_0)
   & \triangleq & \sum_{n_1,n_2\geq1;n_1+n_2=n }
   N(A_{n_1}(u_0), A_{n_2}(u_0)) \mbox{\quad for } n> 1,
\end{eqnarray*}
then the solution map
$$
u[u_0]=\sum_{n=1}^\infty A_n(u_0) \mbox{\quad in } S \mbox{ (absolutely convergent)}
$$
for small data $u_0\in D$. Moreover, we have
\begin{prop}(\cite{BT}) Suppose that (\ref{IEQ}) is  quantitatively well-posed in $D,S$, with
a solution map $u_0\mapsto u[u_0]$ from a ball $B_D$ in $D$ to a ball  $B_S$ in $S$. Suppose that
these spaces are then given other norms $D'$ and $S'$, which are weaker than  $D$ and $S$ in the sense
that
$$
\|u_0\|_{D'}\lesssim \|u_0\|_{D},\quad \|u\|_{S'}\lesssim \|u\|_{S}.
$$
Suppose that the solution map $u_0\mapsto u[u_0]$ is continuous  from $(B_D,\|\cdot\|_{D'})$
to $(B_S,\|\cdot\|_{S'})$. Then for each $n$, the nonlinear operator $A_n: D\rightarrow S$
 is continuous  from $(B_D,\|\cdot\|_{D'})$
to $(B_S,\|\cdot\|_{S'})$.
\end{prop}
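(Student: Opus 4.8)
The plan is to recover each $A_n$ as an explicit limit, \emph{uniform in the data}, of maps that are already continuous from $(B_D,\|\cdot\|_{D'})$ into $(B_S,\|\cdot\|_{S'})$, and then to invoke the elementary fact that such a uniform limit of continuous maps into a metric space is again continuous. The extraction device is real dilation of the data. First I would record that, since $L$ is $\R$-linear and $N(u,v)$ is $\R$-bilinear (being conjugate-linear, not $\C$-linear, in its second argument), the recursion defining the $A_n$ yields, by an immediate induction on $n$,
\beq
A_n(\lambda u_0)=\lambda^n A_n(u_0)\qquad\text{for all }\lambda\in\R,\ u_0\in D. \label{hom}
\eeq
Here one \emph{cannot} appeal to a Cauchy integral in a complex parameter, because $\lambda\mapsto u[\lambda u_0]$ is only real-analytic, not holomorphic; this is the one spot where the $u\bar u$ structure departs from the $u^2$ case of \cite{BT}. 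From the quantitative bounds $\|L(u_0)\|_S\lesssim\|u_0\|_D$ and $\|N(u,v)\|_S\lesssim\|u\|_S\|v\|_S$ one gets a Catalan-type recursion for the best constants $a_n=a_n(\|u_0\|_D)$ with $\|A_n(u_0)\|_S\le a_n$; consequently, after shrinking $B_D$ if necessary, $a_n\le C\rho^n$ for some $\rho<1$, uniformly for $u_0\in B_D$, and the perturbative series
\beq
u[\lambda u_0]=\sum_{k\geq1}A_k(\lambda u_0)=\sum_{k\geq1}\lambda^k A_k(u_0)\qquad(\lambda\in[0,1],\ u_0\in B_D) \label{series}
\eeq
converges in $S$ geometrically, uniformly in $u_0\in B_D$ and $\lambda\in[0,1]$ (note $\lambda u_0\in B_D$ when $\lambda\in[0,1]$).

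I would then argue by induction on $n$, assuming $A_1,\dots,A_{n-1}$ continuous from $(B_D,\|\cdot\|_{D'})$ to $(B_S,\|\cdot\|_{S'})$ (the hypothesis is vacuous for $n=1$). For $\lambda\in(0,1]$ put
\bmul
\Phi_{n,\lambda}(u_0)\triangleq\lambda^{-n}\Big(u[\lambda u_0]-\sum_{k=1}^{n-1}\lambda^k A_k(u_0)\Big)\\
=A_n(u_0)+\sum_{k>n}\lambda^{k-n}A_k(u_0). \label{Phi}
\emul
For each fixed $\lambda$, $\Phi_{n,\lambda}$ is continuous from $(B_D,\|\cdot\|_{D'})$ to $(B_S,\|\cdot\|_{S'})$: the dilation $u_0\mapsto\lambda u_0$ is $D'$-continuous and maps $B_D$ into itself, it is followed by the hypothesised $D'$-to-$S'$ continuous solution map $u[\cdot]$, the subtracted finite sum is continuous by the induction hypothesis, and addition and scalar multiplication are $\|\cdot\|_{S'}$-continuous. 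On the other hand the tail in (\ref{Phi}) satisfies $\|\sum_{k>n}\lambda^{k-n}A_k(u_0)\|_S\le\sum_{k>n}\lambda^{k-n}a_k\lesssim\lambda$, uniformly for $u_0\in B_D$, so $\Phi_{n,\lambda}\to A_n$ in $S$, hence (since $\|\cdot\|_{S'}\lesssim\|\cdot\|_S$) in $S'$, uniformly on $B_D$, as $\lambda\to0^+$. A uniform limit of continuous maps into the metric space $(B_S,\|\cdot\|_{S'})$ being continuous, $A_n$ is continuous from $(B_D,\|\cdot\|_{D'})$ to $(B_S,\|\cdot\|_{S'})$; and $\|A_n(u_0)\|_S\le a_n\le\sum_k a_k$ keeps $A_n(B_D)\subset B_S$, closing the induction.

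The point demanding the most care is the interchange of the limit $\lambda\to0^+$ with continuity in $u_0$, which is exactly why I insist on the geometric, $u_0$-uniform convergence of (\ref{series}); that uniformity is supplied by the quantitative well-posedness hypotheses (after possibly shrinking $B_D$), and the rest is bookkeeping to check that each operation used — precomposition with a dilation, finite summation, scalar multiplication, and passage to a uniform limit — preserves continuity as a map $(B_D,\|\cdot\|_{D'})\to(B_S,\|\cdot\|_{S'})$. I do not expect a genuine obstacle, this being essentially the argument of \cite{BT}; the only substantive deviation is the use of real rather than complex dilations, forced by the conjugation in the nonlinearity.
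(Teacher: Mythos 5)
Your argument is correct and is essentially the proof from \cite{BT} that the paper invokes without reproducing: real homogeneity $A_n(\lambda u_0)=\lambda^n A_n(u_0)$, geometric bounds $a_n\lesssim \rho^n$ from the Catalan-type recursion after shrinking $B_D$, extraction of $A_n$ as a $u_0$-uniform limit of maps that are continuous by the induction hypothesis and the hypothesised continuity of $u[\cdot]$, and the fact that a uniform limit of continuous maps is continuous. Your remark that only real dilations are available here, because $N$ is conjugate-linear in its second slot, is exactly the right (and sufficient) adaptation to the $u\bar u$ nonlinearity.
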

This proposition gives us a way to disprove well-posedness in coarse topologies, simply by establishing
that at least one of the operators $A_n$ is discontinuous.

\vspace{0.5cm}
\section{Some Ill-posedness Analysis}

In this section, we concentrate our attention on the consequences which are derived from
the application on Proposition 4.1. We expect to obtain some necessary restriction on
the regularity exponents for well-posedness theory. Roughly speaking, by Proposition 4.1,
if the solution map is continuous from $D$ to $S$, then so is the quadratic
$$
A_2: u_0\mapsto N(Lu_0,Lu_0).
$$

We consider $D=H^{s,a}(\R)$, $S=C_t^0([0,1];H^{s,a}(\R))$, in which we set
the lifetime $\delta=1$ by scale invariance. Fix $N\gg 1$ and $\varepsilon_0 \ll 1$, set
$$
\widehat{u_0}(\xi)=\varepsilon_0 N^{-s} \chi_{[-10,10]}(|\xi|-N),
$$
then $\|u_0\|_{H^{s,a}}\sim \varepsilon_0$, for any $a\in \R^+$.

First, we consider the case  $D=H^{s}(\R)$ and $S=C_t^0([0,1];H^{s}(\R))$, then
 $\|A_2\|_{C_t^0([0,1];H^{s})}$ is equal to
\begin{eqnarray}
   &   & \sup_{0\leq t \leq 1}\left\|\displaystyle\int_0^t S(t-t')
         \left(S(t')u_0\cdot
         \overline{S(t')u_0}\right)\,dt'\right\|_{H^{s}}\nonumber\\
   & = & \sup_{0\leq t \leq 1}\biggl\|\langle\xi\rangle^{s}\displaystyle\int_0^t\!\!\!\int \exp(-i(t-t')
         \xi^2)[\exp(-it'\xi_1^2)\widehat{u_0}(\xi_1)\nonumber\\
   &   &
         \cdot
         \exp(it'(\xi-\xi_1)^2)\widehat{u_0}(-\xi+\xi_1)]\,d\xi_1 dt'\biggl\|_{L^2_{\xi}}\nonumber\\
   & = & \sup_{0\leq t \leq 1}\biggl\|\langle\xi\rangle^{s}\displaystyle\int_0^t\!\!\!\int \exp(-it
         \xi^2)\exp(2it'\xi(\xi-\xi_1))\widehat{u_0}(\xi_1)\widehat{u_0}(-\xi+\xi_1)\,d\xi_1 dt'\biggl\|_{L^2_{\xi}}
         \label{5.1}
\end{eqnarray}
by the Fourier transform and (\ref{4.3}) in the second step. Further, (\ref{5.1}) has a lower bound of
\begin{equation}
\sup_{0\leq t \leq 1}\biggl\|\displaystyle\int_0^t\!\!\!\int \exp(-it
         \xi^2)\exp(2it'\xi(\xi-\xi_1))\widehat{u_0}(\xi_1)\widehat{u_0}(-\xi+\xi_1)
         \,d\xi_1 dt'\biggl\|_{L^2_{\xi}(\frac{1}{100N},\frac{1}{10N})}.
         \label{5.2}
\end{equation}
Note that, for $\xi\in \Big[\dfrac{1}{100N},\dfrac{1}{10N}\Big]$,
\begin{equation}
\mbox{Re}\left(\exp(-it\xi^2)\exp(2it'\xi(\xi-\xi_1))\right)>\dfrac{1}{2},\label{5.3}
\end{equation}
whenever $0\leq t'\leq t \leq 1$ and $\xi_1$ resides in the support of $u_0$. Hence, we have
\begin{eqnarray*}
(\ref{5.2})
   &\gtrsim & N^{-2s}  \,\|1\|_{L^2_{\xi}(\frac{1}{100N},\frac{1}{10N})}\\
   & \sim & N^{-2s-\frac{1}{2}}.
\end{eqnarray*}
For the continuity of $A_2$, it's necessary that
 $s\geq -\dfrac{1}{4}$. This proves Theorem 1.1.

From the computation above, the threshold is much restricted by the
$L^2-$norm in the low frequency in $H^s(\R)$. It's a reason that we
consider the  modification spaces $H^{s,a}(\R)$ to lower the
regularity in low frequency. A similar computation (but replaces
$H^{s}(\R)$ by $H^{s,a}(\R)$) shows that the necessary condition on
$s$ is changed into
\begin{equation}
s\geq -\frac{1}{4}-\frac{1}{2}a. \label{sa}
\end{equation}
One may thus expert to lower the exponent $s$ by setting $a>0$.

On the other hand, the exponent $s$ can't lower to $-\infty$ by choosing various $a$ in (\ref{sa}).
Indeed, if we localize $\xi$ to the region $(1,2)$, then similarly,
$\|A_2\|_{C_t^0([0,1];H^{s,a})}$ has a lower bound of
\begin{equation}
\sup_{0\leq t \leq 1}\biggl\|\displaystyle\int_0^t\!\!\!\int \exp\left(-it
         \xi^2\right)\exp(2it'\xi(\xi-\xi_1))\widehat{u_0}(\xi_1)\widehat{u_0}(-\xi+\xi_1)
         \,d\xi_1 dt'\biggl\|_{L^2_{\xi}(1,2)}.\label{5.4}
\end{equation}
Set $t=\dfrac{1}{100}N^{-1}$ now, then again we have (\ref{5.3}), and (\ref{5.4})
has a lower bound of
$
N^{-2s-1},
$
which implies another restriction that $s\geq -\dfrac{1}{2}$ for each $a\in \R^+$.

Moreover, set a new data
$$
\widehat{u_0}(\xi)=\varepsilon_0 N^{a+\frac{1}{2}} \chi_{[N^{-1},2N^{-1}]}(|\xi|),
$$
then $\|u_0\|_{H^{s,a}}\sim \varepsilon_0$, for any $s\in \R$. On the other hand, $\|A_2\|_{C_t^0([0,1];H^{s,a})}$
is equal to
\beqs
\begin{split}
     &   \,\,\sup_{0\leq t \leq 1}\biggl\||\xi|^a \displaystyle\int_0^t\!\!\!\int \exp(-it\xi^2)
         \exp(2it'\xi(\xi-\xi_1))\widehat{u_0}(\xi_1)\widehat{u_0}(-\xi+\xi_1)\,d\xi_1 dt'\biggl\|_{L^2_{\xi}}\\
\geq &
         \,\,\sup_{0\leq t \leq 1}\biggl\||\xi|^a\displaystyle\int_0^t\!\!\!\int
         \mbox{Re}\left(\exp(-it\xi^2\right)
         \exp\left(2it'\xi(\xi-\xi_1)\right)
         \widehat{u_0}(\xi_1)
         \widehat{u_0}(-\xi+\xi_1)\,d\xi_1 dt'\biggl\|_{L^2_{\xi}(\frac{1}{100N},\frac{1}{10N})}\\
\gtrsim &
         \,\,N^{2(a+\frac{1}{2})}\,N^{-1}\,N^{-a}\,N^{-\frac{1}{2}}\\
   =    &
         \,\,N^{a-\frac{1}{2}},
\end{split}
\eeqs
which implies  the necessary condition on the exponent $a$ of $a\leq \dfrac{1}{2}$ for each $s\in \R$. Thus proves
Theorem 1.3.

\vspace{0.5cm}
\section{Bilinear Estimates}
As discussing above, in order to prove Theorem 1.4, we just need
(\ref{BE}). By the pasting Lemma 2.3, we divide the proof of
(\ref{BE}) into four cases. It will be very convenient to using the
estimate (\ref{emb2}) in the following precess.
\begin{lem}
When $\mbox{\rm supp}\,\tilde{u},\tilde{v}
\subset \cup_j(A_j\cap B_{\geq k_0j})$, then
\begin{equation}
\left\|\dfrac{1}{\langle\tau-\xi^2\rangle}
(\tilde{u}\ast \tilde{v}^\star)\right\|_{X}
\lesssim
\|\tilde{u}\|_{Y}\,\|\tilde{v}\|_{Y}.\label{BE1}
\end{equation}
\end{lem}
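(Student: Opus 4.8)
The plan is to reduce \eqref{BE1} to the multiplier estimates of Section~3. On the support of $\tilde u,\tilde v$ we have $\langle\tau_i-\xi_i^2\rangle\gtrsim 2^{k_0 j_i}$, so the modulation weights are large relative to the frequency; this is precisely the regime in which the $Y$-norm (which weighs with $\hat X_{0,\beta}$, $\beta=0+$, plus the $L^2_\xi L^1_\tau$ piece) controls the $X$-norm of the output after paying the loss $\langle\tau-\xi^2\rangle^{-1}$. First I would dualize: writing $w$ for the dual function with $\|w\|_{\hat X^{0,1/2}(\cdot)}$-type norm (more precisely using $\hat{\underline X}^{0,1/2}$ via \eqref{emb2}, since the output frequency will be forced to be small or to inherit a large modulation, where the critical $b=\tfrac12$ estimate is available as remarked after Corollary~3.5), the left side becomes a trilinear form
\[
\int_\ast \frac{m_{-\rho,a}(\xi)}{\langle\tau-\xi^2\rangle}\,\tilde u(\xi_1,\tau_1)\,\tilde v^\star(\xi_2,\tau_2)\,w(\xi,\tau).
\]
The key algebraic fact is the resonance identity $(\tau-\xi^2)-(\tau_1-\xi_1^2)-(-\tau_2-(-\xi_2)^2)=-\xi^2+\xi_1^2+\xi_2^2 = 2\xi_1\xi_2$ (after $\xi=\xi_1+\xi_2$ and using $\tilde v^\star$); hence $\max(\langle\tau-\xi^2\rangle,\langle\tau_1-\xi_1^2\rangle,\langle\tau_2-\xi_2^2\rangle)\gtrsim |\xi_1\xi_2|$.

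Next I would do a case analysis on which of the three frequencies $|\xi|,|\xi_1|,|\xi_2|$ is comparable to the max and a Littlewood--Paley decomposition. In each case the weight $m_{-\rho,a}(\xi)\langle\tau-\xi^2\rangle^{-1}$ times the $Y$-to-$X$ discrepancy in weights is dominated, using $2\xi_1\xi_2$ in the modulation, by a product of the multipliers $m_k$, $k=1,2,3$, raised to power $\le \tfrac12$ — this is where $0\le\rho\le\tfrac14+\tfrac12 a$, $0\le a<\tfrac12$, and the definition $k_0=\tfrac{2\rho+2\alpha}{1-2\beta}$ are exactly what make the bookkeeping close. The condition $\mathrm{supp}\subset\cup_j(A_j\cap B_{\ge k_0 j})$ guarantees we never need a modulation weight we don't have: if the output modulation is small, then one of the input modulations is $\gtrsim|\xi_1\xi_2|$ and since inputs live in $B_{\ge k_0 j_i}$ this is consistent; if $|\xi|$ is itself small (the high-high-to-low cascade, $|\xi|\le 1$), the factor $m_{-\rho,a}(\xi)=|\xi|^a$ is harmless and $\langle\tau-\xi^2\rangle^{-1}$ combined with $|\xi_1\xi_2|\sim |\xi_1|^2\gtrsim 1$ in the modulation gives the needed decay. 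After these reductions, the remaining trilinear bounds are exactly \eqref{I1}, \eqref{I2}, \eqref{I3} (with $s=\tfrac12-$ absorbing the $m_k^{1/2}$ losses) together with Lemma~2.1 and Lemma~3.2 for the $L^2_\xi L^1_\tau$ component of $\|\cdot\|_Y$, and one sums the dyadic pieces using the geometric gain coming from the $B_{\ge k_0 j}$ restriction, just as in the proof of Lemma~2.3(1).

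I expect the main obstacle to be the high-high interaction producing a \emph{low} output frequency $|\xi|\le 1$ combined with an output modulation $\langle\tau-\xi^2\rangle$ that is \emph{not} large: there the $X$-norm on the left is genuinely the $\hat X^{0,1/2}$ norm with its $\ell^1_d$ summation, and one cannot afford any $\epsilon$-loss in the modulation exponent. This is the point of the remark after Corollary~3.5 and of \eqref{emb2}: in this sub-case the output is localized in $\xi$ to a single (small) dyadic block, $|\xi|\ll|\xi_1|$, so the endpoint $b=\tfrac12$ version of the $I^s_1$-type estimate is available in $\hat{\underline X}^{0,1/2}$, and $\|\cdot\|_{\hat X^{0,1/2}}\le\|\cdot\|_{\hat{\underline X}^{0,1/2}}$ closes the estimate. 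Verifying that the numerology $\alpha=(\tfrac14-\tfrac12\rho)+$, $\beta=0+$, $k_0=\tfrac{2\rho+2\alpha}{1-2\beta}$ leaves a strictly positive power of $2^j$ to sum in every sub-case — in particular that $a<\tfrac12$ is used precisely to beat the $|\xi_1\xi_2|$ factor when $\xi$ is small — is the part that needs care but no new ideas.
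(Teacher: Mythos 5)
Your overall strategy does not match what is actually needed here, and two of its load-bearing steps fail. First, the resonance identity is wrong for this nonlinearity: with $\tilde v^\star(\xi_2,\tau_2)=\tilde v(-\xi_2,-\tau_2)$ one gets $\tau-\xi^2=(\tau_1-\xi_1^2)+(\tau_2+\xi_2^2)-2\xi\xi_2$, so the three modulations dominate $|\xi||\xi_2|$, \emph{not} $|\xi_1\xi_2|$ (the latter is the identity for the nonlinearity $u^2$; the difference between $2\xi_1\xi_2$ and $2\xi\xi_2$ is precisely why $u\bar u$ behaves differently, as the introduction stresses). Consequently your treatment of the high--high--to--low cascade, which rests on ``$|\xi_1\xi_2|\sim|\xi_1|^2\gtrsim1$ in the modulation,'' collapses exactly in the hardest case $|\xi|\ll1$, where $|\xi||\xi_2|$ gives essentially nothing. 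Second, the reduction to \eqref{I1}--\eqref{I3} cannot be carried out with two $Y$-inputs: each of those estimates needs an $\hat{X}_{0,\frac12-}$ norm on at least one input, whereas the $Y$-norm only carries the weight $\langle\tau_i-\xi_i^2\rangle^{\beta}$ with $\beta=0+$. The support condition $B_{\geq k_0 j}$ lower-bounds the modulation, so it lets one trade modulation weight for frequency weight (this is Lemma 2.3(1)), but it goes the wrong way for upgrading a $\beta=0+$ weight to a $\frac12-$ weight; a $Y$-function on this support can have arbitrarily large modulation at essentially no cost. So the trilinear machinery of Section 3 is simply not available for this lemma (it is used in Lemmas 6.2--6.4, where at least one factor carries the full $X$-weight $\langle\tau-\xi^2\rangle^{1/2}$).

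The paper's actual proof is much more elementary and uses neither the resonance identity nor the support hypothesis in the estimate itself. After passing to $\hat{\underline{X}}^{0,\frac12}$ via \eqref{emb2}, it reduces the claim to showing that $\sum_d 2^{-d/2}\|m_{-\rho,a}(\xi)(\tilde u\ast\tilde v^\star)\|_{L^2_{\xi\tau}(B_d)}$ is bounded by the product of the $\hat X_{\alpha,\beta}$-components of the $Y$-norms, i.e.\ to an $L^2\times L^2$ bound for the convolution with the explicit weights $m_{\alpha,a}(\xi_i)^{-1}\langle\tau_i\mp\xi_i^2\rangle^{-\beta}$ inserted. This is then proved by H\"older and Young inequalities in mixed $L^p_\xi L^q_\tau$ norms, using only $\|1\|_{L^q_\tau(B_d)}\sim 2^{d/q}$ with $q>2$ (so that $\sum_d 2^{-d/2+d/q}$ converges), the $\tau$-integrability of $\langle\tau\pm\xi^2\rangle^{-\beta q_i}$ for $\beta>0$, and the $\xi$-integrability of $|\xi|^{-\rho p}$, $|\xi|^{-\alpha p_i}$ on $|\xi|\gg1$ and $|\xi|^{-a\cdot 2}$ on $|\xi|\lesssim1$; the conditions close exactly because $2\alpha>\frac12-\rho$ and $a<\frac12$. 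If you want to salvage your write-up, you should abandon the $I^s_k$ route for this case and carry out that direct mixed-norm computation.
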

\noindent{\it Proof.} By (\ref{emb2}), it suffices to show that
\begin{equation*}
\sum_d 2^{-\frac{d}{2}}
\|m_{-\rho,a}(\xi)(\tilde{u}\ast\tilde{v}^\star)\|_{L^2_{\xi\tau}(B_d)}
\lesssim
\|m_{\alpha,a}\tilde{u}\|_{X_{0,\beta}}\,\|m_{\alpha,a}\tilde{v}\|_{X_{0,\beta}},
\end{equation*}
which is equivalent to show
\begin{eqnarray}
&&\sum_d 2^{-\frac{d}{2}}\left\|m_{-\rho,a}(\xi)
\displaystyle \int_\star \dfrac{f(\xi_1,\tau_1)}{m_{\alpha,a}(\xi_1)\langle\tau_1-\xi_1^2\rangle^\beta}
\dfrac{g(-\xi_2,-\tau_2)}{m_{\alpha,a}(\xi_2)\langle\tau_2+\xi_2^2\rangle^\beta}\right\|_{L^2_{\xi\tau}(B_d)}
\nonumber\\
&&\lesssim
\|f\|_{L^2_{\xi\tau}}\,\|g\|_{L^2_{\xi\tau}}\label{6.2}
\end{eqnarray}
for any $f,g\in L^2(\R^2)$, where
$\displaystyle\int_\star
=\int_{\stackrel{\xi_1+\xi_2=\xi,}{\tau_1+\tau_2=\tau}}\,d\xi_1 d\tau_1$.
We may only consider the integration over the region of
$|\xi_1|\geq|\xi_2|$ (it's similar for $|\xi_1|\leq|\xi_2|$). Then we divide (\ref{6.2}) into
two parts to analyze.
$$
\mbox{Part 1. } |\xi_2|\lesssim 1;\quad
\mbox{Part 2. } |\xi_2|\gg 1.
$$

\noindent {\bf Part 1}. $|\xi_2|\lesssim 1$. Note that $|\xi|\lesssim |\xi_1|$, so we always have
$$
m_{-\rho,a}(\xi)\cdot m_{\alpha,a}(\xi_1)^{-1}\lesssim 1,
$$
no matter when $|\xi_1|\leq 1$ or  $|\xi_1|\geq 1$. Therefore, we
have
\begin{eqnarray*}
&& \left\|m_{-\rho,a}(\xi)
   \displaystyle \int_\star \dfrac{f(\xi_1,\tau_1)}{m_{\alpha,a}(\xi_1)\langle\tau_1-\xi_1^2\rangle^\beta}
   \dfrac{g(-\xi_2,-\tau_2)}
   {m_{\alpha,a}(\xi_2)\langle\tau_2+\xi_2^2\rangle^\beta}\right\|_{L^2_{\xi\tau}(B_d)}\\
&\lesssim&
   \left\| \dfrac{f}{\langle\tau-\xi^2\rangle^\beta}\ast
   \dfrac{|\xi|^{-a} g^\star}{\langle\tau+\xi^2\rangle^\beta}\right\|_{L^2_{\xi\tau}( B_d)}\\
&\lesssim&
   \|1\|_{L^\infty_\xi L^q_\tau(B_d)}\,\left\| \dfrac{f}{\langle\tau-\xi^2\rangle^\beta}\ast
   \dfrac{|\xi|^{-a} g^\star}{\langle\tau+\xi^2\rangle^\beta}\right\|_{L^2_{\xi} L^{q_1}_\tau}\\
&\lesssim&
   2^{d/q}\,
   \left\|\dfrac{f}{\langle\tau-\xi^2\rangle^\beta}
   \right\|_{L^2_{\xi} L^{q_2}_\tau}\,
   \left\|\dfrac{|\xi|^{-a} g^\star}{\langle\tau+\xi^2\rangle^\beta}
   \right\|_{L^1_{\xi} L^{q_3}_\tau(|\xi|\lesssim 1)}\\
&\lesssim&
   2^{d/q}\,
   \|f\|_{L^2_{\xi\tau}}\,\|\langle\tau-\xi^2\rangle^{-\beta}\|_{L^\infty_{\xi} L^{q_4}_\tau}\,
   \|g^\star\|_{L^2_{\xi\tau}}\,
   \||\xi|^{-a}\langle\tau+\xi^2\rangle^{-\beta}\|_{L^2_{\xi} L^{q_5}_\tau(|\xi|\lesssim 1)}\,\\
&\lesssim&
   2^{d/q}\,\|f\|_{L^2_{\xi\tau}}\,\|g\|_{L^2_{\xi\tau}},
\end{eqnarray*}
where
$$
\dfrac{1}{q}+\dfrac{1}{q_1}=\dfrac{1}{2};\,\, \dfrac{1}{q_1}=\dfrac{1}{q_2}+\dfrac{1}{q_3}-1;\,\,
\dfrac{1}{q_2}=\dfrac{1}{2}+\dfrac{1}{q_4};\,\,\dfrac{1}{q_3}=\dfrac{1}{2}+\dfrac{1}{q_5}
$$
with $q>2$, $\beta q_4>1$, $\beta q_5>1$, $a<\dfrac{1}{2}$. By an
elementary computation, we see that $q,q_i,i=1,\cdots,5$ are
reasonable when $\beta>0, a<\dfrac{1}{2}$.

\noindent {\bf Part 2}. $|\xi_2|\gg 1$. Then $|\xi_1|\gg 1$, and we have
\begin{eqnarray*}
&& \left\|m_{-\rho,a}(\xi)
   \displaystyle \int_\star \dfrac{f(\xi_1,\tau_1)}
   {m_{\alpha,a}(\xi_1)\langle\tau_1-\xi_1^2\rangle^\beta}
   \dfrac{g(-\xi_2,-\tau_2)}
   {m_{\alpha,a}(\xi_2)\langle\tau_2+\xi_2^2\rangle^\beta}
   \right\|_{L^2_{\xi\tau}(B_d)}\\
&=&
   \left\|m_{-\rho,a}(\xi) \left(\dfrac{f}{|\xi|^\alpha\langle\tau-\xi^2\rangle^\beta}\ast
   \dfrac{g^\star}{|\xi|^\alpha\langle\tau+\xi^2\rangle^\beta}\right)
   \right\|_{L^2_{\xi\tau}(B_d)}\\
&\lesssim&
   \|m_{-\rho,a}(\xi)\|_{L^{p}_\xi L^q_\tau(B_d)}\,
   \left\| \dfrac{f}{|\xi|^\alpha\langle\tau-\xi^2\rangle^\beta}\ast
   \dfrac{g^\star}{|\xi|^\alpha\langle\tau+\xi^2\rangle^\beta}\right\|_{L^{p_1}_{\xi} L^{q_1}_\tau}\\
&\lesssim&
   2^{d/q}\,
   \left\|\dfrac{f}{|\xi|^\alpha\langle\tau-\xi^2\rangle^\beta}
   \right\|_{L^{p_2}_{\xi} L^{q_2}_\tau(|\xi|\gg 1)}\,
   \left\|\dfrac{ g^\star}{|\xi|^\alpha\langle\tau+\xi^2\rangle^\beta}
   \right\|_{L^{p_3}_{\xi} L^{q_3}_\tau(|\xi|\gg 1)}\\
&\lesssim&
   2^{d/q}\,
   \|f\|_{L^2_{\xi\tau}}\,
   \||\xi|^{-\alpha}\langle\tau-\xi^2\rangle^{-\beta}\|_{L^{p_4}_{\xi} L^{q_4}_\tau(|\xi|\gg 1)}\,\\
&        &
   \|g^\star\|_{L^2_{\xi\tau}}\,
   \||\xi|^{-\alpha}\langle\tau+\xi^2\rangle^{-\beta}\|_{L^{p_5}_{\xi} L^{q_5}_\tau(|\xi|\gg 1)}\,\\
&\lesssim&
   2^{d/q}\,\|f\|_{L^2_{\xi\tau}}\,\|g\|_{L^2_{\xi\tau}},
\end{eqnarray*}
where $q,q_i,i=1,\cdots,5$ as Part 1, and
$$
\dfrac{1}{p}+\dfrac{1}{p_1}=\dfrac{1}{2};\,\, \dfrac{1}{p_1}=\dfrac{1}{p_2}+\dfrac{1}{p_3}-1;\,\,
\dfrac{1}{p_2}=\dfrac{1}{2}+\dfrac{1}{p_4};\,\,\dfrac{1}{p_3}=\dfrac{1}{2}+\dfrac{1}{p_5}
$$
with  $\rho p>1$, $\alpha p_4>1$, $\alpha p_5>1$. They are reasonable when
$$
2\alpha>\dfrac{1}{2}-\rho, \,\beta>0.
$$
This completes the proof of the lemma. \hfill$\Box$

\begin{lem}
When
$\mbox{\rm supp}\,\tilde{u}
\subset \cup_j(A_j\cap B_{\leq k_0j})$,
$\mbox{\rm supp}\,\tilde{v}
\subset \cup_j(A_j\cap B_{\geq k_0j})$, then
\begin{equation}
\left\|\dfrac{1}{\langle\tau-\xi^2\rangle} (\tilde{u}\ast
\tilde{v}^\star)\right\|_{X} \lesssim
\|\tilde{u}\|_{X}\,\|\tilde{v}\|_{Y}.\label{BE2}
\end{equation}
\end{lem}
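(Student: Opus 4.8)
The plan is to reduce (\ref{BE2}) to a bilinear $L^2$-estimate and then run a dyadic/resonance case analysis in the spirit of Lemma 6.1, the new feature being that $\tilde v$ lives at high modulation. First I would use (\ref{emb2}) on the output to reduce (\ref{BE2}) to
\[
\sum_d 2^{-\frac d2}\big\|m_{-\rho,a}(\xi)\,(\tilde u\ast\tilde v^\star)\big\|_{L^2_{\xi\tau}(B_d)}\lesssim \|\tilde u\|_X\,\|\tilde v\|_Y,
\]
using $2^{d/2}\langle\tau-\xi^2\rangle^{-1}\sim 2^{-d/2}$ on $B_d$. On the inputs I would exploit: by Lemma 2.1 the $X$-norm of $\tilde u$ controls $m_{-\rho,a}\tilde u$ in $L^2_\xi L^1_\tau$ dyadically in $\xi$, and by (\ref{emb1}) it also retains the full weight $\langle\tau-\xi^2\rangle^{1/2}$ (i.e. $\|\tilde u\|_X\gtrsim \|m_{-\rho,a}\tilde u\|_{\hat{X}_{0,\frac12}}$); the $Y$-norm of $\tilde v$ controls both $m_{-\rho,a}\tilde v^\star$ in $L^2_\xi L^1_\tau$ and $m_{\alpha,a}\langle\tau+\xi^2\rangle^\beta\tilde v^\star$ in $L^2$.

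Next I would decompose $\langle\xi_1\rangle\sim2^{j_1}$, $\langle\xi_2\rangle\sim2^{j_2}$, $\langle\xi\rangle\sim2^{j}$ and $\langle\tau_1-\xi_1^2\rangle\sim2^{d_1}$, $\langle\tau_2+\xi_2^2\rangle\sim2^{d_2}$, $\langle\tau-\xi^2\rangle\sim2^{d}$, with $d_1\le k_0 j_1+O(1)$ and $d_2\ge k_0 j_2-O(1)$, and use the resonance identity $\tau-\xi^2=(\tau_1-\xi_1^2)+(\tau_2+\xi_2^2)-2\xi\xi_2$. Since $\langle\tau_2+\xi_2^2\rangle\gtrsim\langle\xi_2\rangle^{k_0}$, at least one of the following holds: the output modulation is large, $2^d\gtrsim 2^{k_0 j_2}$; or $u$ is at high modulation, $2^{d_1}\gtrsim 2^{k_0 j_2}$ (forcing $j_2\lesssim j_1$); or the resonance is large, $|\xi\xi_2|\gtrsim 2^{k_0 j_2}$ (forcing $|\xi|\gtrsim 2^{k_0 j_2}/\langle\xi_2\rangle$). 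When the two input frequencies are comparable to the output frequency, or one of $\xi_1,\xi_2$ is $O(1)$, the weight ratio $m_{-\rho,a}(\xi)/(m_{-\rho,a}(\xi_1)m_{-\rho,a}(\xi_2))$ is $O(1)$ (as in Part 1 of Lemma 6.1), and the estimate follows from H\"{o}lder in $\tau$ on the slab $B_d$ (costing only $2^{d(\frac1q-\frac12)}$ with $q>2$, still summable against $2^{-d/2}$) together with the crude bounds (\ref{I1})--(\ref{I3}) and Cauchy--Schwarz in the frequencies, which is affordable by the frequency localization.

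The essential case is the high--high interaction cascading into a low output frequency, $2^{j_1}\sim 2^{j_2}\gg 2^{j}$, where the weight ratio grows by a factor $\lesssim 2^{2\rho j_2}$. Here one must recover $2^{-2\rho j_2}$ from the trichotomy above: large output modulation gives $2^{-d/2}\lesssim 2^{-k_0 j_2/2}$ after passing to the $\hat{\underline{X}}^{0,\frac12}$ weight; large $u$-modulation gives $2^{-d_1/2}\lesssim 2^{-k_0 j_2/2}$ from the $\hat{X}^{0,\frac12}$-structure of $\|\tilde u\|_X$; and a large resonance term means $|\xi|$ is bounded below, which both shrinks the $(\xi_1,\tau_1)$-set compatible with a fixed output frequency--modulation pair (a transversality/bilinear-Strichartz gain) and lets $\langle\tau-\xi^2\rangle^{-1}$ be used. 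In each branch one couples the modulation gain with the $m_{\alpha,a}$-gain $2^{-\alpha j_2}$ from the $\hat{X}_{0,\beta}$-part of $\|\tilde v\|_Y$; the parameters $k_0=\frac{2\rho+2\alpha}{1-2\beta}$, $\alpha=(\frac14-\frac\rho2)+$, $\beta=0+$ are chosen exactly so that the resulting gain exponent exceeds $2\rho$ — equivalently $k_0>2\rho$, which holds precisely because $0\le\rho\le\frac14+\frac a2$ and $a<\frac12$, with a positive margin $\frac12-\rho$ that absorbs the $\varepsilon$'s (the $+$ in $\alpha$, the $0+$ in $\beta$, the $2^{d(1/q-1/2)}$ slacks, the $|\xi|^a$ integration constant). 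Summing the geometric series in $d,d_1,d_2$ and then Cauchy--Schwarz in $j_1\sim j_2$ closes the estimate.

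I expect the main obstacle to be precisely this last case in the branch where the $\xi\xi_2$-resonance is the only large quantity: this is the configuration behind the $s=-\frac14-\frac12 a$ ill-posedness of Section 5 and behind the classical failure of the $X_{s,b}$ bilinear estimate for $u\bar u$, where the bilinear Strichartz bound degenerates like $|\xi|^{-1/2}$ at low output frequency. One cannot invoke an off-the-shelf estimate; instead one localizes the $(\xi_1,\tau_1)$-integration using the modulation windows of $u$ and $v$, bounds the measure of the resulting set by something of order $\min(2^{k_0 j_2},\,2^{d}/|\xi|,\,\langle\xi_2\rangle)\cdot 2^{k_0 j_2}$, and then has to verify that the full bookkeeping of dyadic exponents closes under all the summations — the purely computational but genuinely delicate heart of the proof.
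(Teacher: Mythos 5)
Your reduction via (\ref{emb2}) and duality against $h_d$ supported in $B_d$ is the same as the paper's, and your handling of the configurations where the weight ratio is bounded (one frequency of size $O(1)$, or output frequency comparable to the inputs) matches the paper's Part 1 and Subpart 1. The difficulty is the case you yourself single out as ``the genuinely delicate heart of the proof,'' namely the high--high interaction $|\xi_1|\sim|\xi_2|\gg|\xi|$: there you do not actually give a proof. You set up a trichotomy from the resonance identity and propose a measure bound $\min(2^{k_0j_2},2^{d}/|\xi|,\langle\xi_2\rangle)\cdot 2^{k_0j_2}$ whose summability you explicitly leave unverified; moreover your exponent accounting is internally inconsistent, since you charge a weight loss of $2^{2\rho j_2}$ (which corresponds to measuring \emph{both} inputs with $m_{-\rho,a}$, i.e.\ to the $L^2_\xi L^1_\tau$ component of $Y$) while simultaneously claiming the gain $2^{-\alpha j_2}$ coming from the $\hat{X}_{0,\beta}$ component of $Y$ --- for a sum-norm you must commit to one component at a time. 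With the correct ratio $m_{-\rho,a}(\xi)/\bigl(m_{-\rho,a}(\xi_1)m_{\alpha,a}(\xi_2)\bigr)\lesssim|\xi_2|^{\rho-\alpha}$, the condition ``$k_0>2\rho$'' you isolate is not the relevant one, and the margins you invoke do not visibly close over the whole range $0\le\rho\le\frac14+\frac12 a$.

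In fact the case you flag as intractable by ``off-the-shelf'' estimates is precisely where the paper invokes one. Since $v$ is the high-modulation factor, the $\hat{X}_{0,\beta}$ part of its $Y$-norm (with $\beta=0+$) controls $m_{\alpha,a}\tilde{v}$ essentially in $L^2_{\xi\tau}$; the paper then bounds the weight ratio crudely by $|\xi_2|^{\rho}$, drops $\langle\tau_2+\xi_2^2\rangle^{-\beta}\le 1$, and applies the bilinear estimate (\ref{I3}) for $I^{\rho}_3$ (valid for $\rho<\frac12$, proved in Lemma 3.3 by the change of variables with Jacobian $2|\xi_2|$), which places the $v$-side in $L^2$ and $h_d$, $f/\langle\tau-\xi^2\rangle^{1/2}$ in $\hat{X}_{0,\frac12-}$. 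This yields $2^{(\frac12-)d}\|f\|_{L^2}\|g\|_{L^2}$, which sums against the $2^{-\frac{d}{2}}$ coming from (\ref{emb2}). The resonance-identity trichotomy and the delicate dyadic bookkeeping you anticipate are genuinely needed only in the $X\times X$ interaction (Lemma 6.4), where neither input carries an $L^2$-type modulation weight; here they are a detour. As written, your proposal leaves the decisive case unproven, so it does not constitute a proof of (\ref{BE2}).
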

\noindent{\it Proof.}
 By (\ref{emb2}), it suffices to show that
\begin{eqnarray}
&&\sum_d 2^{-\frac{d}{2}}\left\|m_{-\rho,a}(\xi)
\displaystyle \int_\star \dfrac{f(\xi_1,\tau_1)}{m_{-\rho,a}(\xi_1)\langle\tau_1-\xi_1^2\rangle^{\frac{1}{2}}}
\dfrac{g(-\xi_2,-\tau_2)}{m_{\alpha,a}(\xi_2)\langle\tau_2+\xi_2^2\rangle^\beta}\right\|_{L^2_{\xi\tau}(B_d)}
\nonumber\\
&&\lesssim
\|f\|_{L^2_{\xi\tau}}\,\|g\|_{\hat{X}^{0,0}}\label{6.4}
\end{eqnarray}
for any $f\in L^2(\R^2),g\in \hat{X}^{0,0}$.
we divide (\ref{6.4}) into
two parts to analyze.
$$
\mbox{Part 1. } |\xi_2|\lesssim 1 \mbox{\,\,or\,\,} |\xi_1|\lesssim 1;\quad
\mbox{Part 2. } |\xi_2|\gg 1 \mbox{ and } |\xi_1|\gg 1.
$$

\noindent {\bf Part 1}. $|\xi_2|\lesssim 1 \mbox{\,\,or\,\,} |\xi_1|\lesssim 1.$ It concludes that
$$
|\xi|,|\xi_1|,|\xi_2|\lesssim 1;
\quad \mbox{or} \quad
|\xi|\sim |\xi_1|\gg1 ,|\xi_2|\lesssim  1;
\quad \mbox{or} \quad
|\xi|\sim |\xi_2|\gg1 ,|\xi_1|\lesssim  1.
$$
But they  all can be treated as Part 1 in the proof of Lemma 6.1. Indeed,
when $|\xi|,|\xi_1|,|\xi_2|\lesssim 1$, then $|\xi|\lesssim|\xi_1|$ or $|\xi|\lesssim|\xi_2|$.
So we have
$$
m_{-\rho,a}(\xi)\cdot m_{-\rho,a}(\xi_1)^{-1}\lesssim 1, \quad \mbox{or }\quad
m_{-\rho,a}(\xi)\cdot m_{\alpha,a}(\xi_2)^{-1}\lesssim 1.
$$
When $ |\xi|\sim |\xi_1|\gg1 ,|\xi_2|\lesssim  1$. Then
$$
m_{-\rho,a}(\xi)\cdot m_{-\rho,a}(\xi_1)^{-1}\lesssim 1.
$$
When $|\xi|\sim |\xi_2|\gg1 ,|\xi_1|\lesssim  1$. Then
$$
m_{-\rho,a}(\xi), m_{\alpha,a}(\xi_2)^{-1}\lesssim 1.
$$
Hence, the argument used in Part 1 in the proof of Lemma 6.1 follows (\ref{6.4}) in this part.

\noindent {\bf Part 2}. $|\xi_2|\gg 1 \mbox{ and } |\xi_1|\gg 1.$ We further divide (\ref{6.2}) into
two subparts to analyze.

 {\bf Subpart 1}. $|\xi_1|\gg 1,|\xi_2|\gg 1, |\xi|\gtrsim |\xi_1|$. Then
$$
m_{-\rho,a}(\xi)\cdot m_{-\rho,a}(\xi_1)^{-1},\, m_{\alpha,a}(\xi_2)^{-1}\lesssim 1.
$$
Therefore, by (\ref{XE1}),
\begin{eqnarray*}
&& \left\|m_{-\rho,a}(\xi)
   \displaystyle \int_\star \dfrac{f(\xi_1,\tau_1)}{m_{-\rho,a}(\xi_1)\langle\tau_1-\xi_1^2\rangle^{\frac{1}{2}}}
   \dfrac{g(-\xi_2,-\tau_2)}
   {m_{\alpha,a}(\xi_2)\langle\tau_2+\xi_2^2\rangle^\beta}\right\|_{L^2_{\xi\tau}(B_d)}\\
&\lesssim&
   \left\| \displaystyle \int_\star \dfrac{f(\xi_1,\tau_1)}{\langle\tau_1-\xi_1^2\rangle^{\frac{1}{2}}}
   \dfrac{g(-\xi_2,-\tau_2)}
   {\langle\tau_2+\xi_2^2\rangle^\beta}\right\|_{L^2_{\xi\tau}(B_d)}\\
&=&
   \sup  \displaystyle \int_\ast h_{d}(\xi,\tau) \dfrac{f(\xi_1,\tau_1)}{\langle\tau_1-\xi_1^2\rangle^{\frac{1}{2}}}
   \dfrac{g(-\xi_2,-\tau_2)}
   {\langle\tau_2+\xi_2^2\rangle^\beta}\\
&\lesssim&
   \sup  \|\check{h_{d}}\|_{L^4_{xt}}\,
   \left\|\F_{\xi\tau}^{-1}\left(\dfrac{f}{\langle\tau-\xi^2\rangle^{\frac{1}{2}}}\right)\right\|_{L^4_{xt}}
   \,\|g\|_{L^2_{\xi\tau}}\\
&\lesssim&
   \sup  \|h_{d}\|_{\hat{X}_{0,\frac{3}{8}+}}\,
   \left\|\dfrac{f}{\langle\tau-\xi^2\rangle^{\frac{1}{2}}}\right\|_{\hat{X}_{0,\frac{3}{8}+}}
   \,\|g\|_{L^2_{\xi\tau}}\\
&\lesssim&
   2^{(\frac{3}{8}+)d}\,
   \|f\|_{L^2_{\xi\tau}}
   \,\|g\|_{L^2_{\xi\tau}},
\end{eqnarray*}
where $\displaystyle\int_\ast
=\int_{\stackrel{\xi_1+\xi_2=\xi,}{
\tau_1+\tau_2=\tau}}\,d\xi_1 d\xi_2 d\tau_1 d\tau_2$, and the supremum is over the set $H_{d}=
\{h_{d}(\xi,\tau): \, \|h_{d}\|_{L^2}\leq 1,
\makebox{supp\,}h_{d}\subset B_d\}$. Inserting it into (\ref{6.4}) in the left-hand side, we
get the estimate in this subpart.

{\bf Subpart 2}. $|\xi_1|\gg 1,|\xi_2|\gg 1, |\xi|\ll |\xi_1|$. Then
$|\xi_1|\sim |\xi_2|$, recall $\rho<\dfrac{1}{2}$ and by (\ref{I3}),
\begin{eqnarray*}
&& \left\|m_{-\rho,a}(\xi)
   \displaystyle \int_\star \dfrac{f(\xi_1,\tau_1)}{m_{-\rho,a}(\xi_1)\langle\tau_1-\xi_1^2\rangle^{\frac{1}{2}}}
   \,\dfrac{g(-\xi_2,-\tau_2)}
   {m_{\alpha,a}(\xi_2)\langle\tau_2+\xi_2^2\rangle^\beta}\right\|_{L^2_{\xi\tau}(B_d)}\\
&\lesssim&
   \left\| \displaystyle \int_\star |\xi_2|^\rho
   \dfrac{f(\xi_1,\tau_1)}{\langle\tau_1-\xi_1^2\rangle^{\frac{1}{2}}}
   \,g(-\xi_2,-\tau_2)
   \right\|_{L^2_{\xi\tau}( B_d)}\\
&=&
   \sup_{H_{d}}  \displaystyle \int_\ast |\xi_2|^\rho  \,h_{d}(\xi,\tau)
   \,\dfrac{f(\xi_1,\tau_1)}{\langle\tau_1-\xi_1^2\rangle^{\frac{1}{2}}}
   \,g(-\xi_2,-\tau_2)\\
&\sim&
   \sup_{H_{d}}  I^\rho_3(h_d, \dfrac{f(\xi,\tau)}{\langle\tau-\xi^2\rangle^{\frac{1}{2}}},g^\star)\\
&\lesssim&
   \sup_{H_{d}}  \|h_{d}\|_{\hat{X}_{0,\frac{1}{2}-}}\,
   \left\|\dfrac{f}{\langle\tau-\xi^2\rangle^{\frac{1}{2}}}\right\|_{\hat{X}_{0,\frac{1}{2}-}}
   \,\|g\|_{L^2_{\xi\tau}}\\
&\lesssim&
   2^{(\frac{1}{2}-)d}\,
   \|f\|_{L^2_{\xi\tau}}
   \,\|g\|_{L^2_{\xi\tau}}.
\end{eqnarray*}
Inserting it into (\ref{6.4}) in the left-hand side, we
have the claim.  \hfill$\Box$

\begin{lem}
When $\mbox{\rm supp}\,\tilde{v} \subset \cup_j(A_j\cap B_{\leq
k_0j})$, $\mbox{\rm supp}\,\tilde{u} \subset \cup_j(A_j\cap B_{\geq
k_0j})$, then
\begin{equation}
\left\|\dfrac{1}{\langle\tau-\xi^2\rangle} (\tilde{u}\ast
\tilde{v}^\star)\right\|_{X} \lesssim
\|\tilde{u}\|_{Y}\,\|\tilde{v}\|_{X}.\label{BE3}
\end{equation}
\end{lem}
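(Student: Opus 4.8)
\quad
The plan is to run the scheme of Lemmas 6.1 and 6.2, but with the modulation roles of $\tilde u$ and $\tilde v$ interchanged; since the nonlinearity $u\bar v$ is not symmetric, this is genuinely a new case and not a corollary of Lemma 6.2. First I would use (\ref{emb2}) to replace the $\|\cdot\|_X$-norm on the left of (\ref{BE3}) by the stronger $\hat{\underline{X}}^{0,1/2}$-norm, so that the left side is dominated by $\sum_d 2^{-d/2}\|m_{-\rho,a}(\xi)(\tilde u\ast\tilde v^\star)\|_{L^2_{\xi\tau}(B_d)}$. Writing $\tilde u=f/(m_{\alpha,a}(\xi_1)\langle\tau_1-\xi_1^2\rangle^{\beta})$ with $\|f\|_{L^2}\lesssim\|\tilde u\|_Y$ (using the $\hat X_{0,\beta}$-summand of $Y$) and $\tilde v^\star(\xi_2,\tau_2)=g(-\xi_2,-\tau_2)/(m_{-\rho,a}(\xi_2)\langle\tau_2+\xi_2^2\rangle^{1/2})$ with $\|g\|_{L^2}\lesssim\|\tilde v\|_X$ (using (\ref{emb1})), the claim becomes the weighted trilinear bound
\[
\sum_d 2^{-d/2}\Big\|\int_\star \frac{m_{-\rho,a}(\xi)}{m_{\alpha,a}(\xi_1)\,m_{-\rho,a}(\xi_2)}\,\frac{f(\xi_1,\tau_1)\,g(-\xi_2,-\tau_2)}{\langle\tau_1-\xi_1^2\rangle^{\beta}\,\langle\tau_2+\xi_2^2\rangle^{1/2}}\Big\|_{L^2_{\xi\tau}(B_d)}\lesssim \|f\|_{L^2}\,\|g\|_{L^2}.
\]
The algebraic input is the resonance identity $(\tau-\xi^2)-(\tau_1-\xi_1^2)-(\tau_2+\xi_2^2)=-2\xi\xi_2$, so on $B_d$ one has $2^d\lesssim\langle\tau_1-\xi_1^2\rangle+\langle\tau_2+\xi_2^2\rangle+|\xi||\xi_2|$, while the support hypotheses give $\langle\tau_1-\xi_1^2\rangle\gtrsim\langle\xi_1\rangle^{k_0}$ and $\langle\tau_2+\xi_2^2\rangle\lesssim\langle\xi_2\rangle^{k_0}$.

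I would then split into cases as in Lemmas 6.1--6.2. In \textbf{Part~1} ($|\xi_1|\lesssim1$ or $|\xi_2|\lesssim1$) the multiplier is $\lesssim|\xi_1|^{-a}|\xi_2|^{-a}$ (with the convention that $|\xi_i|^{-a}$ is replaced by $1$ when $|\xi_i|\gtrsim1$), which is locally $L^2$ in $(\xi_1,\xi_2)$ since $a<\tfrac{1}{2}$, and the Young/H\"older-in-$\tau$ argument of Part~1 of Lemma 6.1 applies (the fact that $\tilde v^\star$ now carries the stronger weight $\langle\tau_2+\xi_2^2\rangle^{-1/2}$ in place of $\langle\cdot\rangle^{-\beta}$ only helps). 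In \textbf{Part~2} ($|\xi_1|,|\xi_2|\gg1$) I distinguish whether $|\xi|$ is comparable to $\max(|\xi_1|,|\xi_2|)$ or $|\xi|\ll|\xi_1|\sim|\xi_2|$. In the first regime the multiplier is $\lesssim1$ (the bad factor $m_{-\rho,a}(\xi_2)^{-1}=|\xi_2|^{\rho}$ is absorbed by $m_{-\rho,a}(\xi)=|\xi|^{-\rho}$ together with $m_{\alpha,a}(\xi_1)^{-1}=|\xi_1|^{-\alpha}$), and, after dualizing against $h_d\in L^2$ supported in $B_d$, I would run the $L^4_{xt}$-Strichartz argument of Subpart~1 of Lemma 6.2, placing $h_d$ and the $\langle\tau_2+\xi_2^2\rangle^{-1/2}$-weighted $\tilde v^\star$ in $L^4_{xt}$ via (\ref{XE1}) and $\tilde u$ in $L^2$; this produces a harmless $2^{(3/8+)d}$ to be summed against $2^{-d/2}$.

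The remaining regime $|\xi|\ll|\xi_1|\sim|\xi_2|$ --- the high-to-low cascade --- is where I expect the real work, and it is precisely what the parameter $k_0=\frac{2\rho+2\alpha}{1-2\beta}$ and the auxiliary space $Y$ (in particular its $L^2_\xi L^1_\tau$ summand) were designed for. Here the multiplier is $\lesssim m_{-\rho,a}(\xi)\,|\xi_1|^{\rho-\alpha}$, and $\rho-\alpha$ may be a strictly positive power, so it must be paid for by modulation. If $|\xi|\gtrsim1$ I would further split on the size of the output modulation: when $2^d\gtrsim\langle\xi_1\rangle^{k_0}$ the prefactor $2^{-d/2}$ already yields $\langle\xi_1\rangle^{-k_0/2}$, and since $\frac{k_0}{2}=\frac{\rho+\alpha}{1-2\beta}>\rho+\alpha>\rho-\alpha$ this beats the multiplier with room $\langle\xi_1\rangle^{-2\alpha-}$ to spare, so any crude bilinear bound ((\ref{I1})--(\ref{I3}), or $L^4\times L^4$ with $\tilde u$ in $L^2$) closes it; when $2^d\ll\langle\xi_1\rangle^{k_0}\lesssim\langle\tau_1-\xi_1^2\rangle$ the resonance identity forces $\langle\tau_2+\xi_2^2\rangle\gtrsim\langle\xi_1\rangle^{k_0}$ (on the thin set where the two large terms in the identity cancel instead, $|\xi|\sim\langle\tau_1-\xi_1^2\rangle/|\xi_1|$ is forced large enough for $m_{-\rho,a}(\xi)=|\xi|^{-\rho}$ to supply the gain), and localizing $\tau_2$ to the interval of length $\sim 2^d$ cut out by the output constraint turns $\langle\tau_2+\xi_2^2\rangle^{-1/2}$ into $2^{d/2}\langle\xi_1\rangle^{-k_0/2}$; since the $d$-sum now runs over only $\lesssim\log\langle\xi_1\rangle$ blocks, $\langle\xi_1\rangle^{-2\alpha-}$ again wins. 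If instead $|\xi|\lesssim1$ (the genuinely low output frequency), I would measure the output of (\ref{BE3}) in the $Y$-norm rather than the $X$-norm --- legitimate since $\|\cdot\|_Z\le\|\cdot\|_Y$ --- whereupon the $L^2_\xi L^1_\tau$ summand, together with $\|\langle\tau-\xi^2\rangle^{-1}\|_{L^2_\tau}<\infty$, reduces the matter to the clean $L^2$ bilinear estimate $\||\xi|^a(\tilde u\ast\tilde v^\star)\|_{L^2_{\xi\tau}(|\xi|\lesssim1)}\lesssim\|\tilde u\|_Y\|\tilde v\|_X$, which one closes by pulling out $|\xi_1|^{\rho}\sim|\xi_2|^{\rho}$ with an $I^{\rho}_3$-type multiplier estimate and absorbing the bounded factors $|\xi|^a$, $|\xi_1|^{-\alpha}$, again using the forced high modulation of $\tilde u$ and the low modulation of $\tilde v$ to cover the gap between the weak weight $\langle\tau_1-\xi_1^2\rangle^{\beta}$ and what the multiplier estimate demands. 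The main obstacle is thus this last cascade regime: the bookkeeping of the three competing powers of $\langle\xi_1\rangle$ --- from $m_{-\rho,a}/m_{\alpha,a}$, from the forced modulation of $\tilde u$, and from the output modulation --- must balance, which is exactly what the value of $k_0$ (the $\frac{1}{1-2\beta}$ factor, tuned to the gap between $\beta$ and $\tfrac{1}{2}$) guarantees; granting this, each individual inequality needed is of a type already established in the multiplier estimates (\ref{36})--(\ref{3.6}), (\ref{IE1}), (\ref{I1})--(\ref{I3}), the Strichartz bound (\ref{XE1}), and the proofs of Lemmas 6.1--6.2.
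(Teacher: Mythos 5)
Your reduction to the weighted trilinear bound, your Part 1, and your treatment of the regime $|\xi|\sim\max(|\xi_1|,|\xi_2|)$ all match what the paper intends: its entire proof of this lemma is the remark that one repeats the proof of Lemma 6.2 with the estimate on $I^{\rho}_2$ substituted for $I^{\rho}_3$ in Subpart 2. The problem is your handling of the cascade regime $|\xi|\ll|\xi_1|\sim|\xi_2|$, which is exactly where you locate "the real work" and where your argument goes astray. The decisive tool there is (\ref{I2}): after bounding the symbol crudely by $|\xi_2|^{\rho}\sim m_2^{\rho}$ with $\rho<\frac{1}{2}$ (note $m_{-\rho,a}(\xi)\le 1$ and $m_{\alpha,a}(\xi_1)^{-1}\le 1$), the estimate $I^{\rho}_2(f',g',h_d)\lesssim\|f'\|_{L^2}\,\|g'\|_{\hat{X}_{0,\frac{1}{2}-}}\,\|h_d\|_{\hat{X}_{0,\frac{1}{2}-}}$ places the weakly weighted factor $f'=f/\langle\tau-\xi^2\rangle^{\beta}$ (from $\tilde u\in Y$) in $L^2$, places $g'=g/\langle\tau+\xi^2\rangle^{1/2}$ (from $\tilde v\in X$) in $\hat{X}_{0,\frac{1}{2}-}$ at no cost, and loses only $2^{(\frac{1}{2}-)d}$ on the dual function, which the prefactor $2^{-d/2}$ absorbs. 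The point you miss is that the power $|\xi_2|^{\rho}$ is paid for by the Jacobian $|J|=2|\xi+\xi_2|\sim|\xi_2|$ in the change of variables behind Lemma 3.3, i.e.\ by bilinear transversality, not by modulation; no appeal to $k_0$, to the support conditions, or to the resonance identity is needed in this lemma (those enter only in Lemma 6.4, where both inputs have low modulation).

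Because you bypass (\ref{I2}), your substitute analysis has concrete defects. First, in the sub-case $2^d\gtrsim\langle\xi_1\rangle^{k_0}$ you spend the entire prefactor $2^{-d/2}$ to manufacture $\langle\xi_1\rangle^{-k_0/2}$, leaving nothing to sum the still-infinite range of $d$ against the $2^{(\frac{1}{2}-)d}$ (or $2^{(\frac{3}{8}+)d}$) loss that every crude bilinear bound you invoke incurs on $h_d$; splitting the exponent does not rescue this for $\rho$ near its upper limit $\frac{1}{4}+\frac{1}{2}a$, since $k_0/8\approx(\rho+\alpha)/4$ eventually falls below $\rho-\alpha$. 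Second, in the low-output-frequency sub-case you invoke an "$I^{\rho}_3$-type" estimate, but (\ref{I3}) demands the non-conjugated factor $\tilde u$ in $\hat{X}_{0,\frac{1}{2}-}$; since $\tilde u$ is only controlled in $Y$ with modulation weight $\langle\tau_1-\xi_1^2\rangle^{\beta}$, and its support condition gives a lower (not upper) bound on $\langle\tau_1-\xi_1^2\rangle$, that norm is simply not available — the "forced high modulation of $\tilde u$" works against you, which is precisely why the paper swaps $I^{\rho}_3$ for $I^{\rho}_2$ here. Third, measuring that piece of the output in $Y$ proves a $Y$-bound rather than the stated $X$-bound; it would still feed into (\ref{BE}) through $Z=X+Y$, but it is not the lemma, and it is unnecessary because the $I^{\rho}_2$ route, combined with (\ref{emb2}), lands in $X$ uniformly in $|\xi|$.
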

\noindent{\it Proof.} It's much similar to the proofs of Lemma 6.2.
But one shall using the estimate on $I^\rho_2$ as a substitute of
$I^\rho_3$ in Subpart 2. We omit the details.   \hfill$\Box$

\begin{lem}
When
$\mbox{\rm supp}\,\tilde{u},\,\tilde{v}
\subset \cup_j(A_j\cap B_{\leq k_0j})$, then
\begin{equation}
\left\|\dfrac{1}{\langle\tau-\xi^2\rangle} (\tilde{u}\ast
\tilde{v}^\star)\right\|_{Z} \lesssim
\|\tilde{u}\|_{X}\,\|\tilde{v}\|_{X}.\label{BE4}
\end{equation}
\end{lem}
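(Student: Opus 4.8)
The plan is to reduce the bilinear estimate (\ref{BE4}) to weighted $L^2$-bilinear estimates and to organise the work around the resonance identity together with the pasting Lemma 2.3. With $\xi=\xi_1+\xi_2$, $\tau=\tau_1+\tau_2$, and writing $\sigma=\tau-\xi^2$, $\sigma_1=\tau_1-\xi_1^2$, $\sigma_2=\tau_2+\xi_2^2$ for the output and the two input modulation weights occurring in $\tilde u\ast\tilde v^\star$, one has the elementary identity $\sigma-\sigma_1-\sigma_2=-2\xi\xi_2$. Since by hypothesis $\langle\sigma_i\rangle\lesssim\langle\xi_i\rangle^{k_0}$ on $\mbox{\rm supp}\,\tilde u,\tilde v$, this forces, whenever $|\xi\xi_2|$ is large compared with $\langle\xi_1\rangle^{k_0}$, the output modulation to be $\langle\sigma\rangle\sim|\xi\xi_2|$; in that regime the prefactor $\langle\tau-\xi^2\rangle^{-1}$ produces a genuine gain of size $|\xi\xi_2|^{-1}$, and this is the key structural point.

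First I would set $w=\langle\tau-\xi^2\rangle^{-1}(\tilde u\ast\tilde v^\star)$ and split it into the high--high to low part $\{|\xi_1|\sim|\xi_2|\gg\max(1,|\xi|)\}$, which I route into $Y$ using $\|\cdot\|_Z\le\|\cdot\|_Y$, and its complement, which I split into its pieces supported in $\cup_j A_j\cap B_{\le k_0j+5}$ and in $\cup_j A_j\cap B_{\ge k_0j+5}$ and place, by subadditivity of $\|\cdot\|_Z$ and Lemma 2.3, into $X$ and $Y$ respectively. On the complement the frequencies are essentially balanced, so $m_{-\rho,a}(\xi)$ is comparable to the weight attached to $\xi_1$ or to $\xi_2$, and I would run the scheme of Lemmas 6.1--6.3: use (\ref{emb2}) to pass to $\hat{\underline X}$, extract from $\tilde u,\tilde v$ the modulation weights $\langle\sigma_1\rangle^{1/2-}$, $\langle\sigma_2\rangle^{1/2-}$ (legitimate by (\ref{emb1})), and reduce to an inequality of the form $\sum_d2^{-d/2}\|m_{-\rho,a}(\xi)(\cdots)\|_{L^2(B_d)}\lesssim\|f\|_{L^2}\|g\|_{L^2}$ --- for the $Y$-pieces one first uses $\|\langle\tau-\xi^2\rangle^{-1}h\|_{L^1_\tau}\lesssim\|h\|_{L^2_\tau}$ and $\langle\tau-\xi^2\rangle^{\beta-1}\lesssim1$ to replace the $L^2_\xi L^1_\tau$- and $\hat X_{0,\beta}$-norms by $L^2_{\xi\tau}$, the $m_{\alpha,a}$-weight being absorbed by the relation $k_0=\tfrac{2\rho+2\alpha}{1-2\beta}$, i.e.\ $\alpha+\rho+k_0(\beta-1)<0$. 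A dyadic decomposition in $|\xi_1|\ge|\xi_2|$ then leaves only the $L^4_{xt}$ Strichartz bound (\ref{XE1}) and the multiplier estimates (\ref{I1})--(\ref{I3}), with the very-low-frequency weight $|\xi_2|^{-a}$ being $L^2_\xi$-integrable precisely because $a<\tfrac12$, and with $2^{-d/2}$ dominating the $2^{d/q}$, $q>2$, produced by H\"older.

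The hard part --- and the only place where the thresholds $\rho\le\tfrac14+\tfrac12a$ and $a<\tfrac12$ are genuinely used --- is the high--high to very low cascade $|\xi_1|\sim|\xi_2|\sim N\gg1$, $|\xi|\lesssim1$, which is the very mechanism behind the ill-posedness of Theorems 1.1 and 1.3; there $m_{\alpha,a}(\xi)=m_{-\rho,a}(\xi)=|\xi|^a$ while the two input weights contribute $|\xi_1\xi_2|^{\rho}\sim N^{2\rho}$. I would decompose the output modulation dyadically, $\langle\sigma\rangle\sim M$ (the pieces being orthogonal in $L^2_{\xi\tau}$, so that they are summed in $\ell^2$ rather than $\ell^1$), note that the resonance identity pins the output frequency to $|\xi|\sim M/N$ once $M\gg N^{k_0}$ and to $|\xi|\lesssim N^{k_0-1}$ otherwise, bound $\|(\tilde u\ast\tilde v^\star)(\xi,\cdot)\|_{L^2_\tau}\le\|\tilde u\|_{L^2_{\xi\tau}}\|\tilde v\|_{L^2_\xi L^1_\tau}$ by Young's inequality, and then use $\|\tilde u\|_{L^2_{\xi\tau}}\lesssim N^{\rho}\|\tilde u\|_X$ together with $\|\tilde v\|_{L^2_\xi L^1_\tau}\lesssim N^{\rho}\|\tilde v\|_X$ --- the latter being Lemma 2.1 combined with the weight $m_{-\rho,a}$, i.e.\ exactly the place where the $\ell^1$-in-$d$, weight-$2^{d/2}$ structure of the $\hat X^{0,1/2}$-norm defining $X$ is exploited. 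Multiplying by the $\xi$-volume factor $|\xi|^{1/2}$ on $\{|\xi|\sim r\}$, by $|\xi|^a$, and by the gain $\langle\tau-\xi^2\rangle^{-1}\sim(rN)^{-1}$ on the bulk $r\gg N^{k_0-1}$, one gets for each dyadic piece a bound $\lesssim r^{a-1/2}N^{2\rho-1}\|\tilde u\|_X\|\tilde v\|_X$; the geometric series over $r\in[N^{-1},1]$ is dominated, because $a<\tfrac12$, by $r\sim N^{-1}$, giving $N^{2\rho-a-1/2}\|\tilde u\|_X\|\tilde v\|_X$, which is admissible exactly when $\rho\le\tfrac14+\tfrac12a$; the purely resonant remainder $|\xi|\lesssim N^{-1}$ (no modulation gain, but a $\xi$-interval of length $\lesssim N^{-1}$) contributes $r^{a+1/2}N^{2\rho}$ and sums to the same bound. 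Getting this last bookkeeping right --- the orthogonality in $M$, and its compatibility with the $\ell^1$-in-$d$ structure of $Z$ --- is the crux of the argument.
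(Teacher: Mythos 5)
Your overall architecture coincides with the paper's: reduce via the pasting lemma and the resonance identity $\tau-\xi^2=(\tau_1-\xi_1^2)+(\tau_2+\xi_2^2)-2\xi\xi_2$, treat the frequency-balanced interactions with the $L^4_{xt}$ Strichartz bound and the multiplier estimates (\ref{I1})--(\ref{I3}), and route the high--high to very-low cascade into $Y$. The balanced cases are fine. But in the cascade regime $|\xi_1|\sim|\xi_2|\sim N\gg 1$, $|\xi|\sim r\lesssim 1$ --- which you correctly identify as the crux --- your bookkeeping has a genuine gap, in two related places. First, your bound $\|(\tilde u\ast\tilde v^\star)(\xi,\cdot)\|_{L^2_\tau}\leq\|\tilde u\|_{L^2_{\xi\tau}}\|\tilde v\|_{L^2_\xi L^1_\tau}$ controls only the $\hat X_{0,\beta}$ ($L^2_{\xi\tau}$-type) half of the $Y$-norm. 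The other half of $\|\cdot\|_Y$ is $\|m_{-\rho,a}\,\cdot\,\|_{L^2_\xi L^1_\tau}$, and converting your $L^2_\tau$ output bound to $L^1_\tau$ on the modulation block $\langle\tau-\xi^2\rangle\sim M\sim rN$ costs a factor $M^{1/2}$, so the effective gain from the denominator drops from $(rN)^{-1}$ to $(rN)^{-1/2}$; the dyadic piece then contributes $r^{a}N^{2\rho-1/2}$, which sums (dominated by $r\sim 1$) to $N^{2\rho-1/2}$ and fails for $\rho>\frac14$, i.e.\ exactly in the new range $a>0$ you are trying to reach. The paper avoids this by never leaving $L^1_\tau$: it uses $\|f\ast g^\star\|_{L^\infty_\xi L^1_\tau}\leq\|f\|_{L^2_\xi L^1_\tau}\|g\|_{L^2_\xi L^1_\tau}$ (Young in $\tau$, Cauchy--Schwarz in $\xi_1$), with both inputs in $L^2_\xi L^1_\tau$ via Lemma 2.1, so the full pointwise gain $\langle\tau-\xi^2\rangle^{-1}\gtrsim(|\xi||\xi_2|)^{-1}$ survives. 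You should do the same.

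Second, your ``purely resonant remainder'' is misidentified. The output-modulation gain is unavailable not merely for $|\xi|\lesssim N^{-1}$ but throughout $|\xi|\lesssim N^{k_0-1}$ (whenever $\max\{|\tau_1-\xi_1^2|,|\tau_2+\xi_2^2|\}\gtrsim|\xi||\xi_2|$, which the support hypothesis permits up to $|\xi|N\lesssim N^{k_0}$), and $N^{k_0-1}\gg N^{-1}$ since $k_0\approx\rho+\frac12>0$. Your no-gain volume count $r^{a+1/2}N^{2\rho}$ over $r\lesssim N^{k_0-1}$ is dominated by $r\sim N^{k_0-1}$ and yields $N^{(k_0-1)(a+\frac12)+2\rho}=N^{(2a+1)(2a+3)/8}$ at the endpoint $\rho=\frac14+\frac a2$, a positive power of $N$. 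In this intermediate region one must instead exploit the $\langle\tau_i-(\pm)\xi_i^2\rangle^{1/2}$ weight carried by the $X$-norm of whichever input has the dominant modulation; this is precisely the paper's subcase handled with $I^{1/2}_2$ and $I^{1/2}_3$ from Lemma 3.3 (and is why that lemma is stated with an $L^2$ norm on one slot and $\hat X_{0,\frac12+}$ on the other two). Both defects are repairable, but as written the proposal does not close the estimate for $\rho>\frac14$.
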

\noindent{\it Proof.}
It suffices to show that for any $f,g \in \hat{X}^{0,0}$,
\begin{equation}
\left\|\dfrac{1}{\langle\tau-\xi^2\rangle}
   \displaystyle \int_\star \dfrac{f(\xi_1,\tau_1)}{m_{-\rho,a}(\xi_1)\langle\tau_1-\xi_1^2\rangle^{\frac{1}{2}}}
   \dfrac{g(-\xi_2,-\tau_2)}
   {m_{-\rho,a}(\xi_2)\langle\tau_2+\xi_2^2\rangle^{\frac{1}{2}}}\right\|_{Z}
\lesssim
   \|f\|_{\hat{X}^{0,0}}\|g\|_{\hat{X}^{0,0}}.
  \label{6.6}
\end{equation}
We may assume that $|\xi_1|\geq|\xi_2|$ in the integral domain
(it's similar for $|\xi_1|\leq|\xi_2|$). Then we divide (\ref{6.6}) into
three parts to analyze.
$$
\mbox{Part 1. } |\xi_2|\lesssim 1;\quad
\mbox{Part 2. } |\xi_2|\gg 1, |\xi|\sim |\xi_1|;\quad
\mbox{Part 3. } |\xi_2|\gg 1, |\xi|\ll |\xi_1|.
$$

\noindent {\bf Part 1}. $|\xi_2|\lesssim 1$. By using the embedding $X\hookrightarrow Z$
in the left-hand side of (\ref{6.6}), it can
be treated as Part 1 in the proof in Lemma 6.1.

\noindent {\bf Part 2}. $|\xi_2|\gg 1, |\xi|\sim |\xi_1|$. Then, $|\xi|, |\xi_1|\gg 1$. By
 $X\hookrightarrow Z$ and (\ref{emb2}), the left-hand side of (\ref{6.6}) is  bounded by
\begin{equation}
\sum_d 2^{-\frac{d}{2}}\left\|
\displaystyle \int_\star \dfrac{f(\xi_1,\tau_1)}{\langle\tau_1-\xi_1^2\rangle^{\frac{1}{2}}}
\dfrac{|\xi_2|^\rho g(-\xi_2,-\tau_2)}{\langle\tau_2+\xi_2^2\rangle^{\frac{1}{2}}}\right\|_{L^2_{\xi\tau}(B_d)}.
\label{6.7}
\end{equation}
Note that, by (\ref{I3}),
\begin{eqnarray*}
&& \left\|
   \displaystyle \int_\star
   \dfrac{f(\xi_1,\tau_1)}{\langle\tau_1-\xi_1^2\rangle^{\frac{1}{2}}}\,
   \dfrac{|\xi_2|^\rho g(-\xi_2,-\tau_2)}{\langle\tau_2+\xi_2^2\rangle^{\frac{1}{2}}}
   \right\|_{L^2_{\xi\tau}(B_d)}\\
&=&
   \sup_{H_{d}}  I^\rho_3(h_d, \dfrac{f}{\langle\tau-\xi^2\rangle^{\frac{1}{2}}},
   \dfrac{g^\star}{\langle\tau+\xi^2\rangle^{\frac{1}{2}}})\\
&\lesssim&
   \sup_{H_{d}}  \|h_{d}\|_{\hat{X}_{0,\frac{1}{2}-}}\,
   \left\|\dfrac{f}{\langle\tau-\xi^2\rangle^{\frac{1}{2}}}\right\|_{\hat{X}_{0,\frac{1}{2}-}}
   \,\|g\|_{L^2_{\xi\tau}}\\
&\lesssim&
   2^{(\frac{1}{2}-)d}\,
   \|f\|_{L^2_{\xi\tau}}
   \,\|g\|_{L^2_{\xi\tau}},
\end{eqnarray*}
where the set $H_{d}$ is defined in the proof of Lemma 6.2. Inserting it into (\ref{6.7}), we have (\ref{6.6})
in this part.

\noindent {\bf Part 3}. $|\xi_2|\gg 1, |\xi|\ll |\xi_1|$. Then
$|\xi_1|\sim |\xi_2|$. We further split it into two subparts to
analyze.
$$
\mbox{Subpart 1. } |\tau-\xi^2|\gtrsim \max\{|\tau_1-\xi_1^2|,|\tau_2+\xi_2^2|\};\,\,\,
\mbox{Subpart 2. } |\tau-\xi^2|\ll \max\{|\tau_1-\xi_1^2|,|\tau_2+\xi_2^2|\}.
$$
The division is based on the following algebraic identity
$$
\tau-\xi^2=(\tau_1-\xi_1^2)+(\tau_2+\xi_2^2)-2\xi \xi_2,
$$
which implies
\begin{equation}
\max\left\{|\tau-\xi^2|,|\tau_1-\xi_1^2|,|\tau_2+\xi_2^2|\right\}
\gtrsim
|\xi||\xi_2|.
\label{iabc}
\end{equation}

{\bf Subpart 1}. $|\tau-\xi^2|\gtrsim \max\{|\tau_1-\xi_1^2|,|\tau_2+\xi_2^2|\}$.
Then by (\ref{iabc}), we have $|\tau-\xi^2|\gtrsim |\xi||\xi_2| $. By the embedding
$Y\hookrightarrow Z$ and Lemma 2.1, it suffices to show
\bi
\item[{\rm(1)}]
           $\left\|\dfrac{m_{-\rho,a}(\xi)}{\langle\tau-\xi^2\rangle}
           \displaystyle \int_\star \dfrac{f(\xi_1,\tau_1)}
           {m_{-\rho,a}(\xi_1)}\,
           \dfrac{g(-\xi_2,-\tau_2)}
           {m_{-\rho,a}(\xi_2)}\right\|_{L^2_\xi L^1_\tau}
           \lesssim
           \|f\|_{L^2_\xi L^1_\tau}\|g\|_{L^2_\xi L^1_\tau}$;

\item[{\rm(2)}]
          $\left\|\dfrac{m_{\alpha,a}(\xi)}{\langle\tau-\xi^2\rangle}
           \displaystyle \int_\star \dfrac{f(\xi_1,\tau_1)}
           {m_{-\rho,a}(\xi_1)\langle\tau_1-\xi_1^2\rangle^{\frac{1}{2}}}\,
           \dfrac{g(-\xi_2,-\tau_2)}
           {m_{-\rho,a}(\xi_2)\langle\tau_2+\xi_2^2\rangle^{\frac{1}{2}}}\right\|_{\hat{X}_{0,\beta}}
           \lesssim
           \|f\|_{\hat{X}^{0,0}}\|g\|_{\hat{X}^{0,0}}$,
\ei
for reasonable functions $f,g$.

For (1), we need a further division to analyze.
$$
\mbox{(a). } |\xi|\geq 1;\quad
\mbox{(b). } 1\geq |\xi|\geq |\xi_1|^{-1};\quad
\mbox{(c). } |\xi|\leq |\xi_1|^{-1};.
$$

When (a), $|\xi|\geq 1$. Recall that $\rho<\dfrac{1}{2}$,
then the left-hand side of (1) is controlled by
\begin{eqnarray*}
&&\left\|\dfrac{|\xi|^{-\rho}}{\langle\tau-\xi^2\rangle}
           \displaystyle \int_\star |\xi_1|^{2\rho}
           \,f(\xi_1,\tau_1)
           \,g(-\xi_2,-\tau_2)
           \right\|_{L^2_\xi L^1_\tau}\\
&\lesssim&
           \left\||\xi|^{-\rho-1}
           \displaystyle \int_\star |\xi_1|^{2\rho-1}
           \,f(\xi_1,\tau_1)
           \,g(-\xi_2,-\tau_2)\right\|_{L^2_\xi L^1_\tau}\\
&\lesssim&
           \||\xi|^{-\rho-1}(f\ast g^\star)\|_{L^2_\xi L^1_\tau}\\
&\lesssim&
           \||\xi|^{-\rho-1}\|_{L^2_\xi L^\infty_\tau(|\xi|\geq 1)}\,\|f\ast g^\star\|_{L^\infty_\xi L^1_\tau}\\
&\lesssim&
           \|f\|_{L^2_\xi L^1_\tau}\,\|g\|_{L^2_\xi L^1_\tau}.
\end{eqnarray*}

When (b), $1\geq |\xi|\geq |\xi_1|^{-1}$. Note that $2\rho-a\leq \dfrac{1}{2}$, the
left-hand side of (1) is controlled by
\begin{eqnarray}
&&\left\|\dfrac{|\xi|^{a}}{\langle\tau-\xi^2\rangle}
           \displaystyle \int_\star |\xi_1|^{2\rho}\,f(\xi_1,\tau_1)
           \,g(-\xi_2,-\tau_2)\right\|_{L^2_\xi L^1_\tau}\nonumber\\
&\lesssim&
           \left\|\dfrac{1}{\langle\tau-\xi^2\rangle^{1-a}}
           \displaystyle \int_\star |\xi_1|^{2\rho-a}
           \,f(\xi_1,\tau_1)
           \,g(-\xi_2,-\tau_2)\right\|_{L^2_\xi L^1_\tau}\nonumber\\
&\lesssim&
           \sum_{j_1}\sum_{0\geq  j\geq -j_1}\sum_{d\geq j+j_1}2^{(a-1)d}\,2^{\frac{j_1}{2}}
           \|(f_{j_1}\ast g_{j_1}^\star)\|_{L^2_\xi L^1_\tau(\dot{A}_j\cap B_d)}
           \label{6.9},
\end{eqnarray}
where
$$
f_{j_1}(\xi,\tau)=f(\xi,\tau)\chi_{A_{j_1}}(\xi,\tau),
\quad g_{j_1}(\xi,\tau)=g(\xi,\tau)\chi_{A_{j_1}}(\xi,\tau),
$$
and
$$
\dot{A}_j=\left\{(\xi,\tau)\in \R^2: 2^j\leq |\xi|\leq 2^{j+1}\right\}.
$$
Further, recall that $a<\dfrac{1}{2}$, we have
\begin{eqnarray*}
(\ref{6.9})
&\lesssim&
           \sum_{j_1}\sum_{0\geq j\geq -j_1}\sum_{d\geq j+j_1}2^{(a-1)d}\,2^{\frac{j_1}{2}}
           \,\|1\|_{L^2_\xi L^\infty_\tau(\dot{A}_j\cap B_d)}
           \,\|(f_{j_1}\ast g_{j_1}^\star)\|_{L^\infty_\xi L^1_\tau}\\
&\lesssim&
           \sum_{j_1}\sum_{0\geq j\geq -j_1}\sum_{d\geq j+j_1}
           2^{(a-1)d}\,2^{\frac{j_1}{2}} \,2^{\frac{j}{2}}
           \|f_{j_1}\ast g_{j_1}^\star\|_{L^\infty_\xi L^1_\tau}\\
&\lesssim&
           \sum_{j_1}\sum_{0\geq j\geq -j_1}\sum_{d\geq j+j_1}
           2^{(a-1)d}\,2^{\frac{j_1}{2}}\,2^{\frac{j}{2}}
           \|f_{j_1}\|_{L^2_\xi L^1_\tau}\,\|g_{j_1}\|_{L^2_\xi L^1_\tau}\\
&\lesssim&
           \sum_{j_1}\sum_{0\geq j\geq -j_1}\,2^{(a-\frac{1}{2})j_1}
           \,2^{(a-\frac{1}{2})j}\,
           \|f_{j_1}\|_{L^2_\xi L^1_\tau}\,\|g_{j_1}\|_{L^2_\xi L^1_\tau}\\
&\lesssim&
           \sum_{j_1}
           \|f_{j_1}\|_{L^2_\xi L^1_\tau}\,\|g_{j_1}\|_{L^2_\xi L^1_\tau}\\
&\lesssim&
           \|f\|_{L^2_\xi L^1_\tau}\,\|g\|_{L^2_\xi L^1_\tau},
\end{eqnarray*}
where we use the  Cauchy-Schwarz inequality in the last step.

When (c), $|\xi|\leq |\xi_1|^{-1}$. Again, the
left-hand side of (1) is controlled by
\begin{eqnarray*}
&&
           \left\|\dfrac{1}{\langle\tau-\xi^2\rangle}
           \displaystyle \int_\star |\xi_1|^{2\rho-a}\,f(\xi_1,\tau_1)
           \,g(-\xi_2,-\tau_2)\right\|_{L^2_\xi L^1_\tau}\\
&\lesssim&
           \left\|
           \displaystyle \int_\star |\xi_1|^{\frac{1}{2}}\,f(\xi_1,\tau_1)
           \,g(-\xi_2,-\tau_2)
           \right\|_{L^2_\xi L^1_\tau}\\
&\lesssim&
           \sum_{j_1}2^{\frac{j_1}{2}}\,
           \left\|
           \displaystyle \int_\star \,f_{j_1}(\xi_1,\tau_1)
           \,g_{j_1}(-\xi_2,-\tau_2)
           \right\|_{L^2_\xi L^1_\tau(|\xi|\leq 2^{-j_1})}\\
&\lesssim&
           \sum_{j_1}2^{\frac{j_1}{2}}\,
           \|1\|_{L^2_\xi L^\infty_\tau(|\xi|\leq 2^{-j_1})}
           \left\|
           f_{j_1}\ast g_{j_1}^\star
           \right\|_{L^\infty_\xi L^1_\tau}\\
&\lesssim&
           \sum_{j_1}
           \|f_{j_1}\|_{L^2_\xi L^1_\tau}\,\|g_{j_1}\|_{L^2_\xi L^1_\tau}\\
&\lesssim&
           \|f\|_{L^2_\xi L^1_\tau}\,\|g\|_{L^2_\xi L^1_\tau}.
\end{eqnarray*}

For (2). When $|\xi|\leq 1$, then
the left-hand side of (2) is dominated by
\begin{eqnarray}
&        &
           \left\|\dfrac{|\xi|^a}{\langle\tau-\xi^2\rangle^{1-\beta}}
           \displaystyle \int_\star |\xi_1|^{2\rho}\dfrac{f(\xi_1,\tau_1)}
           {\langle\tau_1-\xi_1^2\rangle^{\frac{1}{2}}}\,
           \dfrac{g(-\xi_2,-\tau_2)}
           {\langle\tau_2+\xi_2^2\rangle^{\frac{1}{2}}}\right\|_{L^2_{\xi\tau}}\nonumber\\
&\lesssim&
           \left\|\dfrac{1}{\langle\tau-\xi^2\rangle^{1-a-\beta}}
           \displaystyle \int_\star |\xi_1|^{\frac{1}{2}}\dfrac{f(\xi_1,\tau_1)}
           {\langle\tau_1-\xi_1^2\rangle^{\frac{1}{2}}}\,
           \dfrac{g(-\xi_2,-\tau_2)}
           {\langle\tau_2+\xi_2^2\rangle^{\frac{1}{2}}}\right\|_{L^2_{\xi\tau}}.
           \label{6.10}
\end{eqnarray}
Choosing $\beta$ small enough, such that $1-a-\beta>\dfrac{1}{2}$. Remember that
$|\tau-\xi^2|\gtrsim \max\{|\tau_1-\xi_1^2|,|\tau_2+\xi_2^2|\}$, so we have a crude bound of
(\ref{6.10}) that
\begin{eqnarray*}
&       &
           \left\|\dfrac{1}{\langle\tau-\xi^2\rangle^{\frac{1}{2}+}}
           \displaystyle \int_\star |\xi_1|^{\frac{1}{2}}\dfrac{f(\xi_1,\tau_1)}
           {\langle\tau_1-\xi_1^2\rangle^{\frac{1}{2}+}}\,
           \dfrac{g(-\xi_2,-\tau_2)}
           {\langle\tau_2+\xi_2^2\rangle^{\frac{1}{2}+}}\right\|_{L^2_{\xi\tau}}\\
&\sim&
           \sup_{\|h\|_{L^2}\leq 1}  I^\frac{1}{2}_3\left(\dfrac{h}{\langle\tau-\xi^2\rangle^{\frac{1}{2}+}},
           \dfrac{f}{\langle\tau-\xi^2\rangle^{\frac{1}{2}+}},
           \dfrac{g^\star}{\langle\tau+\xi^2\rangle^{\frac{1}{2}+}}\right)\\
&\lesssim&
           \sup_{\|h\|_{L^2}\leq 1}  \left\|\dfrac{h}{\langle\tau-\xi^2\rangle^{\frac{1}{2}+}}
           \right\|_{\hat{X}_{0,\frac{1}{2}+}}\,
           \left\|\dfrac{f}{\langle\tau-\xi^2\rangle^{\frac{1}{2}+}}\right\|_{\hat{X}_{0,\frac{1}{2}+}}
           \,\|g\|_{L^2_{\xi\tau}}\\
&\lesssim&
           \|f\|_{L^2_{\xi\tau}}
           \,\|g\|_{L^2_{\xi\tau}}.
\end{eqnarray*}
where we use (\ref{3.6}) in the third step.

When $|\xi|\geq 1$, then
the left-hand side of (2) is dominated by
\begin{eqnarray*}
&    &     \left\||\xi|^{\alpha-1+\beta}
           \displaystyle \int_\star |\xi_1|^{2\rho-1+\beta}\dfrac{f(\xi_1,\tau_1)}
           {\langle\tau_1-\xi_1^2\rangle^{\frac{1}{2}}}\,
           \dfrac{g(-\xi_2,-\tau_2)}
           {\langle\tau_2+\xi_2^2\rangle^{\frac{1}{2}}}\right\|_{L^2_{\xi\tau}}\\
&\lesssim&
           \left\|
           \dfrac{f}
           {\langle\tau-\xi^2\rangle^{\frac{1}{2}}}\ast
           \dfrac{g^\star}
           {\langle\tau+\xi^2\rangle^{\frac{1}{2}}}\right\|_{L^2_{\xi\tau}}\\
&\lesssim&
           \left\|\F_{\xi\tau}^{-1}\left(\dfrac{f}{\langle\tau-\xi^2\rangle^{\frac{1}{2}}}\right)\right\|_{L^4_{xt}}
           \,\left\|\F_{\xi\tau}^{-1}\left(\dfrac{g^\star}{\langle\tau+\xi^2\rangle^{\frac{1}{2}}}\right)\right\|_{L^4_{xt}}\\
&\lesssim&
           \left\|\dfrac{f}{\langle\tau-\xi^2\rangle^{\frac{1}{2}}}\right\|_{\hat{X}_{0,\frac{3}{8}+}}
           \,\left\|\dfrac{g}{\langle\tau-\xi^2\rangle^{\frac{1}{2}}}\right\|_{\hat{X}_{0,\frac{3}{8}+}}\\
&\lesssim&
          \|f\|_{L^2_{\xi\tau}}
          \,\|g\|_{L^2_{\xi\tau}},
\end{eqnarray*}
where we choosing $\beta$ small enough again, such that $2\rho-1+\beta\leq 0$.

{\bf Subpart 2}. $|\tau-\xi^2|\ll \max\left\{|\tau_1-\xi_1^2|,|\tau_2+\xi_2^2|\right\}$. Then by (\ref{iabc}), we have
\begin{equation}
|\tau_1-\xi_1^2|=\max\left\{|\tau_1-\xi_1^2|,|\tau_2+\xi_2^2|\right\}\gtrsim |\xi||\xi_2|
\label{6.11}
\end{equation}
or
$$
|\tau_2+\xi_2^2|=\max\left\{|\tau_1-\xi_1^2|,|\tau_2+\xi_2^2|\right\}\gtrsim
|\xi||\xi_2|.
$$
We just consider the case (\ref{6.11}) (the other is similar).
By the embedding $X\hookrightarrow Z$ and (\ref{emb2}),
the left-hand side of (\ref{6.6}) is dominated by
\begin{eqnarray}
&         &
     \sum_d 2^{-\frac{d}{2}}\left\|m_{-\rho,a}(\xi)
     \displaystyle \int_\star |\xi_2|^{2\rho}
     \dfrac{f(\xi_1,\tau_1)}{\langle\tau_1-\xi_1^2\rangle^{\frac{1}{2}}}
     \dfrac{g(-\xi_2,-\tau_2)}{\langle\tau_2+\xi_2^2\rangle^{\frac{1}{2}}}
     \right\|_{L^2_{\xi\tau}(B_d)}
     \nonumber\\
&\lesssim &
      \sum_d 2^{-\frac{d}{2}}\left\|m_{-\rho,a}(\xi)|\xi|^{-a}
      \displaystyle \int_\star |\xi_2|^{2\rho-a}
      \dfrac{f(\xi_1,\tau_1)}{\langle\tau_1-\xi_1^2\rangle^{\frac{1}{2}-a}}
      \dfrac{g(-\xi_2,-\tau_2)}{\langle\tau_2+\xi_2^2\rangle^{\frac{1}{2}}}
      \right\|_{L^2_{\xi\tau}(B_d)}
      \nonumber\\
&\lesssim &
      \sum_d 2^{-(\frac{1}{2}+)d}
      \left\|
      \displaystyle \int_\star |\xi_2|^{\frac{1}{2}}\,
      f(\xi_1,\tau_1)
      \,\dfrac{g(-\xi_2,-\tau_2)}{\langle\tau_2+\xi_2^2\rangle^{\frac{1}{2}+}}
      \right\|_{L^2_{\xi\tau}(B_d)}
      \label{6.12}
\end{eqnarray}
by the fact $a<\dfrac{1}{2}$, and
$|\tau_1-\xi_1^2|\geq \max\left\{|\tau-\xi^2|,|\tau_2+\xi_2^2|\right\}$.
Since
\begin{eqnarray*}
&       &
           \left\|
           \displaystyle \int_\star |\xi_2|^{\frac{1}{2}}\,
           f(\xi_1,\tau_1)
           \,\dfrac{g(-\xi_2,-\tau_2)}{\langle\tau_2+\xi_2^2\rangle^{\frac{1}{2}+}}
           \right\|_{L^2_{\xi\tau}(B_d)}\\
&\sim&
           \sup_{H_{d}}  I^\frac{1}{2}_2\left(h_{d},
           f,\dfrac{g^\star}{\langle\tau+\xi^2\rangle^{\frac{1}{2}+}}\right)\\
&\lesssim&
           \sup_{H_{d}}  \left\|h_{d} \right\|_{\hat{X}_{0,\frac{1}{2}+}}
           \,\|f\|_{L^2_{\xi\tau}}\,
           \left\|\dfrac{g}{\langle\tau-\xi^2\rangle^{\frac{1}{2}+}}\right\|_{\hat{X}_{0,\frac{1}{2}+}}\\
&\lesssim&
           2^{(\frac{1}{2}+)d}\|f\|_{L^2_{\xi\tau}}
           \,\|g\|_{L^2_{\xi\tau}},
\end{eqnarray*}
where the set $H_{d}$ is defined in the proof of Lemma 6.2.
Inserting it into (\ref{6.12}), we obtain (\ref{6.6}).
This completes the proof of the lemma.   \hfill$\Box$

Combining (\ref{BE1}), (\ref{BE2}),  (\ref{BE3}) and (\ref{BE4}), we
establish (\ref{BE}), and hence finish the proof of  Theorems 1.2
and 1.4.

\vspace{0.5cm}

\end{document}